       \def\mfA{\mathfrak A}
   \def\mfB{\mathfrak B}
    \def\re{\textnormal {Re}}
    \def\p{{\mathbb P}}
    \def\e{{\mathbb E}}
    \def\r{{\mathbb R}}
    \def\c{{\mathbb C}}
    \def\d{{\textnormal d}}
    \def\i{{\textnormal i}}
    \def\calX{\mathcal X}
   \def\calY{\mathcal Y}
   \def\calP{\mathcal P}
   \def\calQ{\mathcal Q}
   \def\calF{\mathcal F}
   \def\calG{\mathcal G}
    \def\bfT{\boldsymbol T}
    \def\bfS{\boldsymbol S}
\newtheorem{theorem}{Theorem}[section]
\newtheorem{lemma}{Lemma}[section]
\newtheorem{proposition}{Proposition}[section]
\newtheorem{corollary}{Corollary}[section]
\theoremstyle{definition}
\newtheorem{assumption}{Assumption}[section]
\newtheorem{remark}{Remark}[section]
\numberwithin{equation}{section}
\title[Dual representations of Laplace transforms]{On the dual representations of Laplace transforms of Markov processes}
\author{Alexey Kuznetsov}
\address
{
Alexey Kuznetsov\\
Department of Mathematics and Statistics\\
York University, 4700 Keele Street
\\ Toronto, Ontario, M3J 1P3, Canada
}
\email{akuznets@yorku.ca}
\author{Yizao Wang}
\address
{
Yizao Wang\\
Department of Mathematical Sciences\\
University of Cincinnati\\
2815 Commons Way\\
Cincinnati, OH, 45221-0025, USA.
}
\email{yizao.wang@uc.edu}
\keywords{Laplace transform; Markov process; Markov semigroup; Hilbert space; L\'evy process; birth-and-death process}
\subjclass[2020]
\begin{document}
\begin{abstract}
We provide a general framework for dual representations of Laplace transforms of Markov processes. Such representations state that the Laplace transform of a finite-dimensional distribution of a Markov process can be expressed in terms of a Laplace transform involving another Markov process, but with coefficients in the Laplace transform and time indices of the process interchanged. Dual representations of Laplace transforms have been used recently to study open ASEP \cite{Bryc_Wang_2018,bryc19limit} and to describe stationary measures of the open KPZ equation \cite{Bryc_et_al_2023}. Our framework covers both recently discovered examples in the literature and several new ones, involving general L\'evy processes and certain birth-and-death processes. 
\end{abstract}
\maketitle





\section{Introduction}\label{section_into}

Recently, several identities on Laplace transforms of Markov processes have been discovered in investigations of limit theorems. The first such example has appeared in \cite{Bryc_Wang_2018}.  Let $\mathbb B^{\textnormal {ex}}$ denote a normalized Brownian excursion \cite{revuz99continuous}, and let $Y$ denote the radial part of a three-dimensional Cauchy process (see \eqref{eq:Cauchy} below for more discussions). Then for $n\in\mathbb N, 0 =  \tilde s_0 <\tilde s_1<\cdots<\tilde s_n<\tilde s_{n+1}=1$ (we use $\tilde s_k$ to distinguish our convention of $s_k$ throughout the paper), $0=t_0<t_1<\cdots<t_n$, we have
\begin{equation}\label{eq:SPL}
\e\Big[ e^{- \sum\limits_{k=1}^{n} (t_k-t_{k-1}){\mathbb B}^{\textnormal{ex}}_{\tilde s_k}}  \Big]=
\sqrt{\frac2\pi}\int_0^{\infty}  \e\Big[ e^{-\frac{1}{2}\sum\limits_{k=0}^{n} (\tilde s_{k+1}-\tilde s_k)Y_{t_{k}}^2 
}| Y_{0}=y\Big] y^2  \d y,
\end{equation}
where the left-hand side is the joint Laplace transform of the Brownian excursion at times $\tilde s_1,\dots,\tilde s_n$, with coefficients $t_k-t_{k-1},k=1,\dots,n$, and the right-hand side is another joint Laplace transform (with infinite measure though) of the squared process $Y^2$ at times $t_1,\dots,t_n$, with coefficients $\tilde s_{k+1}-\tilde s_k, k=0,\dots,n$. In other words, the coefficients in the Laplace transform and time indices of the process  swap their roles. 

Since then, several identities of such a type as \eqref{eq:SPL} were discovered, with the processes $\mathbb B^{\textnormal {ex}}$ and $Y$ replaced by other processes, and also the measure on the right-hand side $\sqrt{2/\pi}y^2\d y$ modified accordingly. We refer to such identities as dual representations of Laplace transforms of the two Markov processes involved.
So far, all these dual representations have played important roles in developing scaling limit theorems for stochastic models with algebraic/combinatorial flavors, notably the asymmetric simple exclusion processes with open boundary (open ASEP) in steady state \cite{Bryc_et_al_2023,bryc19limit,bryc23asymmetric,corwin24stationary}. Proving limit theorems is not our concern here so we only briefly explain the approach behind it. For all these models, joint Laplace transforms of finite-dimensional distributions are explicit and their limit (after appropriate scaling) is computable. For open ASEP such a key feature is due to the so-called Derrida's matrix ansatz \cite{derrida06matrix,derrida93exact}, a well-known method in the mathematical physics literature, in combination with Markov representation in terms of Askey--Wilson processes \cite{bryc17asymmetric}. However, by a straightforward calculation, in the limit the coefficients of the Laplace transform and the time indices often swap their roles. In the case of Brownian excursion for example, the calculation leads to the right-hand side of \eqref{eq:SPL}. The translation from the right-hand side to the left-hand side of \eqref{eq:SPL} in some examples may not be straightforward, and sometimes can be quite challenging. For more details on how the dual representations play a role in limit theorems, we refer the readers to the aforementioned references.

The goal of this paper is to examine the dual representations of Laplace transforms from a different aspect. We are motivated by the following closely related questions:
\begin{itemize}
\item[(i)] What is the mathematical structure behind such representations? How can one conjecture and prove dual representations of Laplace transforms of Markov processes?
\item[(ii)] Are there other examples of dual representations, that is, ones involving other Markov processes?
\item[(iii)] Are there other applications of dual representations besides proving limit theorems?
\end{itemize}
For the first question, we present a relatively satisfactory answer. The main result of the paper is to provide a general framework explaining the origin of dual representations. We demonstrate that the crucial step in constructing dual representations of Laplace transforms is to show that the semigroups of two Markov processes are both diagonalized via certain unitary operators between suitable Hilbert spaces. The conditions are summarized in Assumption \ref{assump:1} below, and the main results are Theorem \ref{thm0} and Corollary \ref{corollary1}. This general framework then allows us to give an answer to question (ii):  in Section \ref{section_examples} we show how our general framework can explain all known instances of duality identities and discover new ones. In particular, 
the example of \eqref{eq:SPL} is explained in full detail in Section \ref{subsection_Brownian_excursion}. We also present two other examples, which are new in the literature: one concerns dual representations for pairs of L\'evy processes  in Section \ref{subsection_Levy_processes} and the other concerns dual representations between CIR diffusion and birth-and-death processes in Section \ref{subsection_CIR}.

For the third question, we are still at the stage of exploration. A promising direction is to consider, in place of the Laplace transform of finite-dimensional distributions of the Markov processes, the transform of integrals of the Markov processes, see Theorem \ref{thm1}. Some concrete examples involving L\'evy processes can be found in Section \ref{subsection_Levy_processes}. In particular, we  hope that these results might lead to new expressions for Laplace transforms of exponential functionals of L\'evy processes, see Corollary \ref{corollary3}.

Last but not least, we emphasize that the conditions presented in Assumption \ref{assump:1} may be somewhat relaxed. It is convenient to work with Hilbert spaces and a unitary operator between them. However,  the most consequential example so far does not fit into this framework. This is the example that appeared in the recent groundbreaking work on the stationary measures of so-called open KPZ equation \cite{corwin24stationary}, and the dual representation is established in \cite{Bryc_et_al_2023}, relating Yakubovich heat kernel and continuous dual Hahn process.  In Section \ref{section_kBM_and_dual_Hahn}, we provide a presentation of this example in a simplified version (see Remark \ref{rem:KPZ}), focusing on explaining how Assumption \ref{assump:1} might be relaxed. 


\section{Dual representation of Laplace transforms: a framework}\label{section_general_case}


Consider two Markov (or sub-Markov) processes $X=\{X_s\}_{s\in \bfS}$  and 
 $Y=\{Y_t\}_{t \in {\bfT}}$ with c\`adl\`ag paths, where $\bfS$ and $\bfT$ are two real intervals
 of the form $[0,b]$ ($b>0$) or $[0,\infty)$. We assume that the process $X$ (resp. $Y$) takes values in $\calX \subseteq \r$ (resp.~$\calY \subseteq \r$). Here each of the sets $\calX$ and $\calY$ is an interval (possibly infinite or semi-infinite) or a countable set.  Associated to processes $X$ and $Y$ are two Markov (or, possibly, sub-Markov) semigroups $\{{\mathcal P}_{s_1,s_2}\}_{s_1<s_2, s_i \in \bfS}$ and 
 $\{{\mathcal Q}_{t_1,t_2}\}_{t_1<t_2, t_i \in \bfT}$ defined via
 \begin{equation}\label{eqn:semigroup_X_Y}
 {\mathcal P}_{s_1,s_2} f(x)=\e[f(X_{s_2})| X_{s_1}=x], 
 \;\;\; 
 {\mathcal Q}_{t_1,t_2}g(y)=\e[g(Y_{t_2})| Y_{t_1}=y],
 \end{equation}
 where $f$ and $g$ are bounded Borel measurable functions on $\calX$ and $\calY$ respectively.

In the following assumption we list the key conditions that allow one to construct dual representations of Laplace transforms of Markov processes.
 
\begin{assumption}\label{assump:1}
There exist 
 
 \begin{itemize}
 \item[(i)]
 two Hilbert spaces $L^2(\calX,\mu)$ and $L^2(\calY,\nu)$,
and a unitary operator 
\[
\calF: L^2(\calX,\mu) \to L^2(\calY,\nu),
\]
where each $L^2$ space is the space of complex-valued functions, 
 \item[(ii)] two continuous functions $\phi: \calY \to \c^+$ and $\psi: \calX \to \c^+$, where $\c^+:=\{z\in \c \; : \; \re(z)\ge 0\}$ is the right complex half-plane,
 \item[(iii)] measurable functions $h_s(x): s \in \bfS, x\in \calX\to{(0,\infty)}$ and $j_t(y): t \in \bfT, y\in \calY\to{(0,\infty)}$, 
\end{itemize}
such that Markov semigroups of the processes $X$ and $Y$ satisfy
 \begin{align}
 \label{eqn:semigroup_X}
{\mathcal P}_{s_1,s_2} f(x)&=\left[\frac{1}{h_{s_1}} {\mathcal F}^{-1} e^{(s_1-s_2) \phi} {\mathcal F} (h_{s_2} f)\right](x), \;\;\; 
s_i \in {\bfS}, \; s_1<s_2, \\
\label{eqn:semigroup_Y}
{\mathcal Q}_{t_1,t_2}g(y)&=\left[\frac{1}{j_{t_1}} {\mathcal F} e^{(t_1-t_2) \psi} {\mathcal F}^{-1} (j_{t_2} g)\right](y), \;\;\; 
t_i \in {\bfT}, \; t_1<t_2, 
\end{align} 
 for all bounded Borel measurable functions $f\in L^2(\calX, (h_{s_2})^2\mu)$ and $g \in L^2(\calY, (j_{y_2})^2\nu)$. 
\end{assumption}

Formulas \eqref{eqn:semigroup_X} and 
\eqref{eqn:semigroup_Y} state that the Markov semigroup ${\mathcal Q}_{t_1,t_2}$ is a switch transform of the semigroup ${\mathcal P}_{s_1,s_2}$, see  \cite{Patie_2023}[Section 3.3].

One important implication of our Assumption \ref{assump:1} is that the Markov semigroups ${\mathcal P}_{s_1,s_2}$ and ${\mathcal Q}_{t_1,t_2}$ (which are originally defined via 
formulas \eqref{eqn:semigroup_X_Y} for bounded Borel measurable functions) can be uniquely extended to contraction operators acting on Hilbert spaces  \\
$L^2(\calX, (h_{s_2})^2\mu)$ and $L^2(\calY, (j_{y_2})^2\nu)$. Indeed, assuming that $f \in L^2(\calX, (h_{s_2})^2\mu)$ is bounded, we have 
\begin{equation}\label{eq:contraction2}
\left\|\calP_{s_1,s_2}f\right\|_{L^2(\calX,(h_{s_1})^2\mu)}\le \|f\|_{L^2(\calX,(h_{s_2})^2\mu)},
\end{equation}
which follows from \begin{equation}\label{eq:contraction}
\left\|\calF^{-1}e^{(s_1-s_2)\phi}\calF f\right\|_{L^2(\calX,\mu)} \le \|f\|_{L^2(\calX,\mu)},
\end{equation}  by our assumptions that $\calF$ is unitary and also that  $|e^{(s_{1}-s_{2}) \phi}|\le 1$. Then we use \eqref{eq:contraction2} to extend the domain of $\mathcal P_{s_1,s_2}$ to $L^2(\calX,(h_{s_2})^2\mu)$,
where the same inequality  
\eqref{eq:contraction2} 
holds. (For the extension, we set $\calP_{s_1,s_2}f$ as the $L^2$-limit of $\calP_{s_1,s_2}(f_n)$ as $n\to\infty$ with $f_n:=f{\bf 1}_{\{|f|\le n\}}, n\in\mathbb N$.)


\begin{remark}\label{rem:r}
The two formulas \eqref{eqn:semigroup_X} 
and \eqref{eqn:semigroup_Y} in the Assumption \ref{assump:1}(iii) should be viewed as spectral representations of the Markov semigroups $\calP_{s_1,s_2}$ and $\calQ_{t_1,t_2}$, extended to the corresponding Hilbert spaces. Sufficient conditions on when a homogeneous Markov semigroup can be extended from bounded Borel measurable functions to a certain Hilbert space are provided in Theorem 5.8 in \cite{Prato2006}: this result states that if the process $X$ is homogeneous with an invariant measure  $\mu$, then the Markov semigroup $\calP_{s_1,s_2}$ can be extended to a contraction semigroup in a Hilbert space. A similar argument can be used to extend this result to the case when the measure $\mu$ is excessive. Note that in the homogeneous case (when $h_s\equiv 1$), formula \eqref{eqn:semigroup_X} implies that $\calP_{s_1,s_2}$ is a normal operator. 
\end{remark}

\begin{remark}\label{rem:f}The reader should keep in mind that whenever they encounter expectations of the form 
\[
\varphi_f(x):=\mathbb E\big[ \xi \times f(X_{s_2}) | X_{s_1}=x\big],
\]
where $\xi$ is a bounded functional of the path $\{X_s\}_{s_1\le s \le s_2}$ and $f \in L^2(\calX,(h_{s_2})^2\mu)$, these should be understood as a result of a two-step procedure: first we extend the operator 
\[
f(\cdot) \mapsto {\mathbb E}\big[ \xi \times f(X_{s_2}) | X_{s_1}=\cdot\big]
\]
from bounded measurable functions $f$
 to the whole space $L^2(\calX,(h_{s_2})^2\mu)$ and then we apply this extension to a specific function $f$ in this Hilbert space. 
 For the extension, as before we take $f_n = f{\mathbf 1}_{\{|f|\le n\}}$, and with  $C$ satisfying $|\xi|\le C$ almost surely, we have
\[
\|\varphi_{f_n}-\varphi_{f_m}\|_{L^2(\calX,(h_{s_1})^2\mu)}  \le C \|f_n-f_m\|_{L^2(\calX,(h_{s_2})^2\mu)}.
\]
So $\{\varphi_{f_n}\}_{n\in\mathbb N}$ is Cauchy in $L^2(\calX,(h_{s_1})^2\mu)$ and we let $\varphi_f$ denote its limit. 
\end{remark}
We provide some comments on notations and conventions that will be used throughout the paper. In \eqref{eqn:semigroup_X}, $e^{(s_1-s_2)\phi}$ is understood as a multiplication operator mapping a measurable function $f(x)$ to $e^{(s_1-s_2)\phi(x)}f(x)$. The same is true for $e^{(t_1-t_2)\psi}$ in \eqref{eqn:semigroup_Y}. When $\calF$ is an operator and $f$ is in the domain of $\calF$, we write $\calF f(y) = [\calF f](y)$ where $y$ is in the domain of $\calF f$. 
For a sequence of numbers $\{u_k\}_{0\le k \le n}$ we denote the increments by 
$$
\Delta u_k:=u_{k+1}-u_k, \;\;\; 0\le k \le n-1.
$$
The following theorem is our first main result. 

\begin{theorem}\label{thm0}
Let conditions in Assumption \ref{assump:1} be true. Let $s_i \in \bfS$ and $t_i \in \bfT$
be such that 
$$
s_0<s_1<\dots<s_{n}, \;\;\; t_0<t_1< \dots < t_{n}. 
$$
Assume that $f : \calX \to \c$ and $g : \calY \to \c$ are functions such that $h_{s_n} f \in L^2(\calX,\mu)$ and 
\begin{equation}\label{condition_l_r}
j_{t_{n}} 
g={\mathcal F}(h_{s_{n}} f). 
\end{equation} 
Denote
\[
{\mathsf F}(x):=\e\Big[ e^{-\sum\limits_{k=0}^{n-1} \Delta t_k \psi(X_{s_k})
}
f(X_{s_{n}}) | X_{s_0}=x \Big], \;\;\; x \in \calX,
\]
and 
\[
{\mathsf G}(y):=\e\Big[ e^{-\sum\limits_{k=0}^{n-1}  \Delta s_k\phi(Y_{t_{k+1}})}g(Y_{t_{n}}) | Y_{t_0}=y \Big], 
\;\;\; y\in \calY. 
\]
Then $h_{s_0} {\mathsf F} \in L^2(\calX,\mu)$ and the functions ${\mathsf F}$ and ${\mathsf G}$ satisfy
\begin{equation}\label{eqn:thm0_main}
j_{t_0}{\mathsf G}={\mathcal F} (h_{s_0}{\mathsf F}) \in L^2(\calY,\nu). 
\end{equation}
\end{theorem}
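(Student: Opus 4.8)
\emph{Proof proposal.} The plan is to peel off one time-step at a time from each of the two expectations, using the Markov property to write $\mathsf F$ and $\mathsf G$ as backward recursions, and then to feed the spectral representations \eqref{eqn:semigroup_X}--\eqref{eqn:semigroup_Y} of Assumption \ref{assump:1} into these recursions so that a single downward induction closes the argument. First I would introduce, for $0\le k\le n$, the partial expectations
\[
\mathsf F^{(k)}(x):=\e\Big[ e^{-\sum_{j=k}^{n-1}\Delta t_j\,\psi(X_{s_j})} f(X_{s_n}) \,\Big|\, X_{s_k}=x\Big],
\]
\[
\mathsf G^{(k)}(y):=\e\Big[ e^{-\sum_{j=k}^{n-1}\Delta s_j\,\phi(Y_{t_{j+1}})} g(Y_{t_n}) \,\Big|\, Y_{t_k}=y\Big],
\]
so that $\mathsf F^{(0)}=\mathsf F$, $\mathsf G^{(0)}=\mathsf G$, $\mathsf F^{(n)}=f$ and $\mathsf G^{(n)}=g$. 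In $\mathsf F^{(k)}$ the $j=k$ factor equals $e^{-\Delta t_k\psi(x)}$ after conditioning on $X_{s_k}=x$, so pulling it out and then conditioning on $X_{s_{k+1}}$ gives $\mathsf F^{(k)}=e^{-\Delta t_k\psi}\,\calP_{s_k,s_{k+1}}\mathsf F^{(k+1)}$; in $\mathsf G^{(k)}$ the analogous factor $e^{-\Delta s_k\phi(Y_{t_{k+1}})}$ sits at the later time $t_{k+1}$, so conditioning first on $Y_{t_{k+1}}$ yields $\mathsf G^{(k)}=\calQ_{t_k,t_{k+1}}\big(e^{-\Delta s_k\phi}\,\mathsf G^{(k+1)}\big)$.

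Second, I would substitute the spectral representations. Writing $\tilde F_k:=h_{s_k}\mathsf F^{(k)}$ and $\tilde G_k:=j_{t_k}\mathsf G^{(k)}$, formula \eqref{eqn:semigroup_X} (with $s_k-s_{k+1}=-\Delta s_k$) turns the first recursion into
\[
\tilde F_k=e^{-\Delta t_k\psi}\,\calF^{-1} e^{-\Delta s_k\phi}\,\calF\,\tilde F_{k+1},
\]
while \eqref{eqn:semigroup_Y} (with $t_k-t_{k+1}=-\Delta t_k$) turns the second into
\[
\tilde G_k=\calF\, e^{-\Delta t_k\psi}\,\calF^{-1}\big(e^{-\Delta s_k\phi}\,\tilde G_{k+1}\big).
\]
Comparing the two, $\calF\tilde F_k=\calF\big(e^{-\Delta t_k\psi}\,\calF^{-1} e^{-\Delta s_k\phi}\,\calF\,\tilde F_{k+1}\big)$, so if $\tilde G_{k+1}=\calF\tilde F_{k+1}$ then $\tilde G_k=\calF\tilde F_k$. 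The hypothesis \eqref{condition_l_r} is precisely the base case $\tilde G_n=j_{t_n}g=\calF(h_{s_n}f)=\calF\tilde F_n$, so a downward induction on $k$ from $n$ to $0$ yields $\tilde G_0=\calF\tilde F_0$, which is exactly \eqref{eqn:thm0_main}.

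Third --- and this is where the real work lies --- I would check that every object above is legitimate in the Hilbert-space sense, since $f$ is only assumed to satisfy $h_{s_n}f\in L^2(\calX,\mu)$ rather than to be bounded. Each multiplication operator $e^{-\Delta t_k\psi}$ and $e^{-\Delta s_k\phi}$ has modulus at most $1$, because $\phi,\psi$ take values in $\c^+$ and $\Delta t_k,\Delta s_k>0$, hence is a contraction on the relevant $L^2$ space; combined with the unitarity of $\calF$, this shows inductively that $\tilde F_k,\tilde G_k\in L^2$, and in particular $h_{s_0}\mathsf F=\tilde F_0\in L^2(\calX,\mu)$, the first assertion of the theorem. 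The delicate point is to justify the two probabilistic recursions for $\mathsf F^{(k)}$ and $\mathsf G^{(k)}$ as genuine $L^2$-identities: each partial expectation is exactly of the form treated in Remark \ref{rem:f}, with bounded path-functional $\xi=e^{-\sum_{j}\Delta t_j\psi(X_{s_j})}$ (respectively its $Y$-analogue, $|\xi|\le 1$), so it must be defined by the extension procedure there, after which the tower property together with the contraction estimate \eqref{eq:contraction2} lets one pass the Markov-property peeling-off through the $L^2$-limit. I expect this reconciliation of the pointwise/probabilistic manipulations with the $L^2$-extension of the semigroups to be the only real obstacle; the algebraic heart of the theorem is the one-line inductive step above.
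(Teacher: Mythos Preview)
Your proposal is correct and follows essentially the same approach as the paper: the paper unrolls the full operator composition $h_{s_0}e^{-\Delta t_0\psi}\calP_{s_0,s_1}e^{-\Delta t_1\psi}\cdots\calP_{s_{n-1},s_n}f$, substitutes the spectral representations \eqref{eqn:semigroup_X}--\eqref{eqn:semigroup_Y}, and then reads off the $\calQ$-composition after applying $\calF$, which is exactly your induction written out in one long display. Your explicit downward induction on the intermediate $\tilde F_k,\tilde G_k$ is a tidier packaging of the same manipulation, and your treatment of the $L^2$ issues (contractions plus the extension procedure of Remark~\ref{rem:f}) matches the paper's.
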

\begin{proof}
We prove $h_{s_0}\mathsf F\in L^2(\calX,\mu)$. 
We compute 
\begin{align}
\nonumber
&h_{s_0}(x){\mathsf F}(x)=
h_{s_0}(x)\e\Big[ e^{\sum\limits_{k=0}^{n-1} (t_{k}-t_{k+1}) \psi(X_{s_k})} f(X_{s_{n}}) | X_{s_0}=x \Big]\\
\label{eq:thm0_proof1}
&=
\Big[h_{s_0} e^{(t_0-t_1) \psi} {\mathcal P}_{s_0,s_1} e^{(t_1-t_2) \psi} {\mathcal P}_{s_1,s_2} e^{(t_2-t_3) \psi} \cdots
\\
\nonumber
& \qquad \qquad\cdots   {\mathcal P}_{s_{n-2},s_{n-1}} e^{(t_{n-1}-t_{n}) \psi} 
{\mathcal P}_{s_{n-1},s_{n}} f\Big](x) \\
\nonumber
&=\Big[e^{(t_0-t_1) \psi}{\mathcal F}^{-1} e^{(s_0-s_1) \phi} {\mathcal F} 
e^{(t_1-t_2) \psi}  {\mathcal F}^{-1} e^{(s_1-s_2) \phi}  \cdots  \\
\nonumber
& \qquad \qquad\cdots  {\mathcal F}^{-1} e^{(s_{n-2}-s_{n-1}) \phi} {\mathcal F} 
e^{(t_{n-1}-t_{n}) \psi} 
{\mathcal F}^{-1} e^{(s_{n-1}-s_{n}) \phi} {\mathcal F} 
(h_{s_{n}} f)\Big](x).   
\end{align}
We have seen that under our assumption,  $h_{s_n}f\in L^2(\calX,\mu)$ implies 
$$
{\mathcal F}^{-1} e^{(s_{n-1}-s_{n}) \phi} {\mathcal F} (h_{s_{n}} f)\in L^2(\calX,\mu)$$
 (by contraction \eqref{eq:contraction}). The multiplication by $e^{(t_1-t_2)\psi}$ is also a contraction by our assumption. 
By repeating the argument $n-1$ times, we see that 
$h_{s_0}{\mathsf F} \in L^2(\calX,\mu)$.  Using the assumption \eqref{condition_l_r} and the above formula \eqref{eq:thm0_proof1}, we compute 
\begin{align}
\nonumber
\frac{1}{j_{t_0}} {\mathcal F} (h_{s_0} {\mathsf F})
&=
 \frac{1}{j_{t_0}} {\mathcal F} 
e^{(t_0-t_1) \psi}  {\mathcal F}^{-1} e^{(s_0-s_1) \phi} 
{\mathcal F} 
e^{(t_1-t_2) \psi}  {\mathcal F}^{-1} e^{(s_1-s_2) \phi} 
  \\
\label{eq:thm0_proof2}
& \qquad  \cdots  {\mathcal F}^{-1} e^{(s_{n-2}-s_{n-1}) \phi} {\mathcal F} 
e^{(t_{n-1}-t_{n}) \psi} 
{\mathcal F}^{-1} e^{(s_{n-1}-s_{n}) \phi} 
(j_{t_{n}} g)   \\
\nonumber
&=
{\mathcal Q}_{t_0,t_1} e^{(s_0-s_1) \phi}
 {\mathcal Q}_{t_1,t_2} e^{(s_1-s_2) \phi} \cdots 
  {\mathcal Q}_{t_{n-1},t_{n}}  e^{(s_{n-1}-s_{n}) \phi} g.
  \end{align}
  In the last step, we used $e^{(s_{n-1}-s_n)\phi}(j_{t_n}g) = j_{t_n}(e^{(s_{n-1}-s_n)\phi}g)$, exchanging the order of two multiplicative operators. 
We recognize the last expression above as a function of $y$ as $\e\Big[ e^{\sum\limits_{k=0}^{n-1} (s_{k}-s_{k+1}) \phi(Y_{t_{k+1}})} g(Y_{t_{n}}) | Y_{t_0}=y \Big]= {\mathsf G}(y)$, as desired. 
\end{proof}

The next corollary presents a useful mechanism for obtaining dual representations of Laplace transforms when the processes $X$ and $Y$ are started at certain initial distributions. 

\begin{corollary}\label{corollary1}
Under the assumptions of Theorem \ref{thm0}, assume in addition  that ${\ell}^X_{\rm init}: \calX \to \c$ 
and ${\ell}^Y_{\rm init} : \calY \to \c$ are functions such that 
${\ell}^X_{\rm init}/h_{s_0} \in L^2(\calX, \mu)$ and
\begin{equation}\label{eqn:l_r_condition}
    \frac{{\ell}^Y_{\rm init}}{j_{t_0}}=
{\mathcal F} \Big(\frac{{\ell}^X_{\rm init}}{h_{s_0}}\Big).
\end{equation}
Then 
\begin{multline}\label{eq:duality_with_entrance_law}
\int_{\calX} 
\e\Big[ e^{-\sum\limits_{k=0}^{n-1}  \Delta t_k\psi(X_{s_k})
}
f(X_{s_{n}}) | X_{s_0}=x \Big] {\ell}^X_{\rm init}(x) \mu(\d x)\\=
\int_{\calY} \e\Big[ e^{-\sum\limits_{k=0}^{n-1}  \Delta s_k\phi(Y_{t_{k+1}})}g(Y_{t_{n}}) | Y_{t_0}=y \Big]
{\ell}^Y_{\rm init}(y) \nu(\d y).
\end{multline}
\end{corollary}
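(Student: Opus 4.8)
The plan is to read \eqref{eq:duality_with_entrance_law} as a single application of Parseval's identity for the unitary operator $\calF$, obtained by feeding into it the conclusion \eqref{eqn:thm0_main} of Theorem \ref{thm0} together with the standing hypothesis \eqref{eqn:l_r_condition}. The guiding observation is that each side of \eqref{eq:duality_with_entrance_law} is a bilinear pairing of two $L^2$ functions, one pair supported on $\calX$ and the other on $\calY$, and that these two pairs are carried into one another, entry by entry, by $\calF$.

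First I would record the two inputs. Theorem \ref{thm0} supplies $h_{s_0}\mathsf F \in L^2(\calX,\mu)$ together with $j_{t_0}\mathsf G = \calF(h_{s_0}\mathsf F)$, while the hypothesis supplies ${\ell}^X_{\rm init}/h_{s_0} \in L^2(\calX,\mu)$ together with ${\ell}^Y_{\rm init}/j_{t_0} = \calF({\ell}^X_{\rm init}/h_{s_0})$. Thus the pair $(h_{s_0}\mathsf F,\ {\ell}^X_{\rm init}/h_{s_0})$ on $\calX$ and the pair $(j_{t_0}\mathsf G,\ {\ell}^Y_{\rm init}/j_{t_0})$ on $\calY$ are matched through $\calF$. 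Next I would rewrite the two integrands of \eqref{eq:duality_with_entrance_law} in terms of these functions: since $h_{s_0}$ and $j_{t_0}$ are strictly positive by Assumption \ref{assump:1}(iii), the factors $h_{s_0}$ and $1/h_{s_0}$ (respectively $j_{t_0}$ and $1/j_{t_0}$) cancel, so the left-hand side equals $\int_{\calX}(h_{s_0}\mathsf F)\,({\ell}^X_{\rm init}/h_{s_0})\,\d\mu$ and the right-hand side equals $\int_{\calY}(j_{t_0}\mathsf G)\,({\ell}^Y_{\rm init}/j_{t_0})\,\d\nu$.

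At this stage the identity to prove reduces to $\int_{\calX}u\,v\,\d\mu=\int_{\calY}(\calF u)(\calF v)\,\d\nu$ with $u=h_{s_0}\mathsf F$ and $v={\ell}^X_{\rm init}/h_{s_0}$, and I would deduce it from the inner-product-preserving property of $\calF$. Writing the Hilbert inner product as $\langle a,b\rangle_{\mu}=\int_{\calX}a\,\overline{b}\,\d\mu$, unitarity gives $\langle u,b\rangle_{\mu}=\langle \calF u,\calF b\rangle_{\nu}$ for every $b\in L^2(\calX,\mu)$, and choosing $b$ with $\overline b=v$ converts both inner products into the desired bilinear integrals. The two $L^2$ memberships recorded above guarantee absolute convergence of all the integrals, so no separate justification of the exchange of integration is required; everything else is direct substitution once Theorem \ref{thm0} is available.

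The main obstacle I anticipate is purely the bookkeeping of complex conjugation, and it is worth isolating it. A unitary operator preserves the Hermitian form $\int a\,\overline b\,\d\mu$, whereas \eqref{eq:duality_with_entrance_law} is stated as the un-conjugated bilinear pairing. Carrying out the computation above literally produces the conjugated identity $\int_{\calX}\mathsf F\,\overline{{\ell}^X_{\rm init}}\,\d\mu=\int_{\calY}\mathsf G\,\overline{{\ell}^Y_{\rm init}}\,\d\nu$ (using $\overline{h_{s_0}}=h_{s_0}$ and $\overline{j_{t_0}}=j_{t_0}$, as these are positive reals), so to land exactly on the stated form one needs $\calF$ to intertwine complex conjugation, equivalently that $\calF$ preserve the real bilinear form $\int uv\,\d\mu$. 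This holds whenever $\calF$ is given by a real symmetric kernel, since then $\calF^{*}=\calF^{-1}$ forces $\int_{\calY}(\calF u)(\calF v)\,\d\nu=\int_{\calX}uv\,\d\mu$; in the concrete constructions of Section \ref{section_examples} the transform has a real kernel and the initial data ${\ell}^X_{\rm init},{\ell}^Y_{\rm init}$ are real-valued, so the conjugation is vacuous and the pairing is genuinely bilinear. I would therefore present the argument via Parseval and explicitly flag this conjugation point as the only nontrivial step.
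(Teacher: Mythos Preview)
Your proposal is correct and follows essentially the same approach as the paper: both rewrite each side as a pairing of $h_{s_0}\mathsf F$ with $\ell^X_{\rm init}/h_{s_0}$ (respectively $j_{t_0}\mathsf G$ with $\ell^Y_{\rm init}/j_{t_0}$) and invoke Plancherel for the unitary $\calF$. Your discussion of the complex-conjugation bookkeeping is in fact more careful than the paper's own proof, which passes from $\int u\,v\,\d\mu$ to $\int(\calF u)(\calF v)\,\d\nu$ without comment; as you observe, this is harmless in all the applications of Section~\ref{section_examples} because the transforms there have real kernels and the initial data are real-valued.
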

\begin{proof}
Formula \eqref{eq:duality_with_entrance_law} can be obtained from \eqref{eqn:thm0_main} by Plancherel's theorem (the fact that ${\mathcal F}$ is an isometry): 
\begin{align*}
   \int_{\calX} {\mathsf F}(x) {\ell}^X_{\rm init}(x) \mu(\d x)&=
   \int_{\calX} h_{s_0}(x) {\mathsf F}(x) \frac{{\ell}^X_{\rm init}(x)}{h_{s_0}(x)} \mu(\d x)=
   \int_{\calY} [{\mathcal F}(h_{s_0} {\mathsf F})](y)
  \Big[  {\mathcal F} \Big(\frac{{\ell}^X_{\rm init}}{h_{s_0}}\Big) \Big](y) \nu(\d y)\\
    &=
    \int_{\calY} j_{t_0}(y) {\mathsf G}(y) \frac{{\ell}^Y_{\rm init}(y)}{j_{t_0}(y)} \nu(\d y)=
     \int_{\calY} {\mathsf G}(y) {\ell}^Y_{\rm init}(y) \nu(\d y).
\end{align*}
\end{proof}

We would like to point out that the functions $f$, $g$, ${\ell}^X_{\rm init}$ and ${\ell}^Y_{\rm init}$
that appear in Theorem \ref{thm0} and 
Corollary \ref{corollary1} may depend on parameters $\{s_k\}_{0\le k \le n}$ and $\{t_k\}_{0\le k \le n}$. 
 When ${\ell}^X_{\rm init}$ and ${\ell}^Y_{\rm init}$ are positive functions, formula \eqref{eq:duality_with_entrance_law} can be interpreted as a dual representation of Laplace transforms as follows: in the left-hand side we have an expected value of a functional of $X$ process
\[
e^{-\sum\limits_{k=0}^{n-1} \Delta t_k \psi(X_{s_k})
}
f(X_{s_{n}}), 
\]
where $X$ is started at time $s_0$ at a (possibly infinite) measure ${\ell}^X_{\rm init}(x) \mu(\d x)$, and in the right-hand side we have an expected value of a functional of $Y$ process
\[
e^{-\sum\limits_{k=0}^{n-1}  \Delta s_k\phi(Y_{t_{k+1}})}g(Y_{t_{n}}),
\]
where $Y$ is started at time $t_0$ at a (possibly infinite) measure ${\ell}^Y_{\rm init}(y)\nu(\d y)$. If one takes $f=1$, then the left-hand side of \eqref{eq:duality_with_entrance_law} becomes exactly the Laplace transform of $(X_{s_1},\dots,X_{s_n})$ with kernel $\calP_s$ and initial law $\ell_{\textnormal{init}}^X$, and to obtain the identity one computes on the right-hand side $\ell_{\textnormal{end}}^Y$ and $g$ explicitly via conditions \eqref{condition_l_r} and \eqref{eqn:l_r_condition}.  The example \eqref{eq:SPL} mentioned in Section \ref{section_into} is obtained in such a way, as explained in Section \ref{subsection_Brownian_excursion}.

Next, we express the result in Theorem \ref{thm0} in an integral form, which would be more useful for certain applications. We assume $h_s = j_t = 1$, so that the Markov processes are homogeneous (and we then let $\mathcal P_s$ and $\mathcal Q_t$  denote the corresponding semigroups). Recall that a $\sigma$-finite measure $\mu$ is excessive for the Markov process $X$, if $\mu\mathcal P_s\le \mu$ for all $s>0$. 

\begin{theorem}\label{thm1}
Let conditions in Assumption \ref{assump:1} be true and assume in addition that $h_{s}=j_t=1$ and the measure $\mu$ (respectively, $\nu$) is excessive for the process $X$ (respectively, $Y$). Let $s=s(w)  : [0,1] \to [0,\mathfrak s]$ and $t=t(w)  : [0,1] \to [0,\mathfrak t]$ be two increasing  
and right-continuous functions with $s(0) = t(0) = 0, s(1) = \mathfrak s$ and $t(1) = \mathfrak t$. 
Assume that $f : \calX \to \c$ and $g : \calY \to \c$ are functions such that $f \in L^2(\calX,\mu)$ and $g={\mathcal F}f$. 
Denote 
\begin{align*}
{\mathsf F}(x)& :=\e\Big[ e^{- \int_{(0,1]}  \psi(X_{s(w-)}) \d t(w)}
f(X_{\mathfrak s}) | X_0=x \Big], \;\;\; x \in \calX,\\
{\mathsf G}(y)&:=\e\Big[ e^{- \int_{(0,1]}   \phi(Y_{t(w)}) \d s(w)} g(Y_{\mathfrak t}) | Y_0=y \Big], 
\;\;\; y\in \calY. 
\end{align*}
Then ${\mathsf F} \in L^2(\calX,\mu)$ and the functions ${\mathsf F}$ and ${\mathsf G}$ satisfy $\mathsf G = \calF \mathsf F$.
\end{theorem}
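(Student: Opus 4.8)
The plan is to obtain Theorem \ref{thm1} as a limit of the discrete dual representation of Theorem \ref{thm0}, approximating the two Stieltjes integrals by Riemann--Stieltjes sums along a sequence of refining partitions of $[0,1]$. First I would fix, for each $N$, a partition $0=w_0^N<w_1^N<\cdots<w_N^N=1$ with mesh tending to $0$, and set $s_k:=s(w_k^N)$, $t_k:=t(w_k^N)$. Since $s$ and $t$ are increasing, these obey $0=s_0\le s_1\le\cdots\le s_N=\mathfrak s$ and $0=t_0\le t_1\le\cdots\le t_N=\mathfrak t$; the non-strict ties are harmless, as a coincidence $s_k=s_{k+1}$ only inserts the identity operator $\calP_{s_k,s_k}=\mathrm{Id}$ (equivalently, merges two consecutive $\psi$-factors) in the operator product appearing in the proof of Theorem \ref{thm0}. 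Applying Theorem \ref{thm0} with $h_s=j_t=1$ and $g=\calF f$ to this partition produces functions $\mathsf F_N(x)=\e[\exp(-\sum_{k=0}^{N-1}(t_{k+1}-t_k)\psi(X_{s_k}))f(X_{\mathfrak s})\mid X_0=x]$ and the analogous $\mathsf G_N$, together with the identity $\mathsf G_N=\calF\mathsf F_N$ in $L^2(\calY,\nu)$ and the uniform bound $\|\mathsf F_N\|_{L^2(\calX,\mu)}\le\|f\|_{L^2(\calX,\mu)}$ coming from \eqref{eq:contraction}.

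The heart of the argument is the pathwise convergence of the Riemann--Stieltjes sums. I would show that, almost surely, $\sum_{k=0}^{N-1}\psi(X_{s(w_k^N)})\,(t(w_{k+1}^N)-t(w_k^N))\to\int_{(0,1]}\psi(X_{s(w-)})\,\d t(w)$ and $\sum_{k=0}^{N-1}\phi(Y_{t(w_{k+1}^N)})\,(s(w_{k+1}^N)-s(w_k^N))\to\int_{(0,1]}\phi(Y_{t(w)})\,\d s(w)$ as the mesh shrinks to $0$. The evaluation at the \emph{left} endpoint $w_k^N$ in the $X$-sum produces the left limit $s(w-)$ in the integrand, whereas the evaluation at the \emph{right} endpoint $w_{k+1}^N$ in the $Y$-sum produces $t(w)$ by right-continuity of $t$; this matches exactly the two integrands in the statement. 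I expect this to be the main obstacle: one must reconcile the left-limit evaluation $s(w-)$ with the c\`adl\`ag jumps of $X$ and the possible atoms of the Stieltjes measures $\d s$ and $\d t$, verifying that the chosen left/right evaluation conventions make integrand and integrator avoid problematic common discontinuities and that these sums genuinely converge (using right-continuity of the paths and of $s,t$, and the finiteness of the jump sets).

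Granting this, since $\re\psi\ge 0$ and $\re\phi\ge 0$, the exponential integrands are bounded in modulus by $1$, so bounded convergence inside the conditional expectation upgrades the almost sure convergence of the sums to the pointwise convergence $\mathsf F_N(x)\to\mathsf F(x)$ and $\mathsf G_N(y)\to\mathsf G(y)$ for (a.e.) $x$ and $y$. Finally I would promote this to $L^2$ convergence. Because $|e^{-(\cdot)}|\le 1$, we have the pointwise dominations $|\mathsf F_N|\le\calP_{\mathfrak s}|f|$ and $|\mathsf G_N|\le\calQ_{\mathfrak t}|g|$. Since $\mu$ and $\nu$ are excessive, each Markov semigroup is an $L^\infty$-contraction and an $L^1$-contraction, hence by interpolation a positivity-preserving $L^2$-contraction, so that $\calP_{\mathfrak s}|f|\in L^2(\calX,\mu)$ and $\calQ_{\mathfrak t}|g|\in L^2(\calY,\nu)$. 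Dominated convergence in $L^2$ then gives $\mathsf F_N\to\mathsf F$ in $L^2(\calX,\mu)$ (in particular $\mathsf F\in L^2(\calX,\mu)$) and $\mathsf G_N\to\mathsf G$ in $L^2(\calY,\nu)$. Continuity of the unitary operator $\calF$ yields $\calF\mathsf F_N\to\calF\mathsf F$, and combining this with $\calF\mathsf F_N=\mathsf G_N\to\mathsf G$ gives $\mathsf G=\calF\mathsf F$, as claimed.
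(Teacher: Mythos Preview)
Your outline is correct and follows the same overall strategy as the paper: discretize the two Stieltjes integrals along a sequence of partitions, apply Theorem \ref{thm0} at each stage to obtain $\mathsf G_N=\calF\mathsf F_N$, establish a.s.\ convergence of the Riemann--Stieltjes sums using the c\`adl\`ag property, and pass to the $L^2$ limit.

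The only substantive difference is in how the $L^2$ limit is justified. The paper does not invoke a single dominating envelope; instead it truncates $f$ to a bounded $f_l$, uses Cauchy--Schwarz inside the expectation to separate the factor $(e^{-\mathcal I_n}-e^{-\mathcal I_m})^2$ from $f_l(X_{\mathfrak s})^2$, and then invokes excessiveness of $\mu$ to show $x\mapsto\e[f_l(X_{\mathfrak s})^2\,|\,X_0=x]\in L^1(\mu)$, so that DCT yields a Cauchy sequence $(\calP^{(n)}f)_n$ in $L^2$. Your route via the pointwise envelope $|\mathsf F_N|\le \calP_{\mathfrak s}|f|$ and $L^2$-dominated convergence is shorter and arguably cleaner; note, however, that you do not need the interpolation argument, since the $L^2$-contraction of $\calP_{\mathfrak s}$ is already available from \eqref{eq:contraction}. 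One small slip of terminology: for unbounded $f$ the step ``bounded convergence inside the conditional expectation'' is really dominated convergence with the (a.e.\ integrable) dominating variable $|f(X_{\mathfrak s})|$, valid for $\mu$-a.e.\ $x$ precisely because $\calP_{\mathfrak s}|f|\in L^2(\mu)$. With that clarification your argument goes through.
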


\begin{proof}
We take an integer $n\ge 2$ and define $s_{n,k}=s(k/n)$ and $t_{n,k}=t(k/n)$ for $0\le k \le n$. We denote 
\begin{align*}
{\mathcal I}_n&:=\sum\limits_{k=0}^{n-1}  \psi(X_{s_{n,k}})
\Delta t_{n,k} , \;\; {\mathcal I}:=\int_{(0,1]}  \psi(X_{s(w-)}) \d t(w),\\ 
 {\mathcal J}_n&:=\sum\limits_{k=0}^{n-1}  \phi(Y_{t_{n,k+1}})\Delta s_{n,k}, \;\;
{\mathcal J}:=\int_{(0,1]} \phi(Y_{t(w)}) \d s(w). 
\end{align*}

Due to our assumptions, the functions 
$\phi(Y_{t(w)})$ and $\psi(X_{s(w)})$ are c\`adl\`ag functions, thus we have ${\mathcal I}_n \to {\mathcal I}$ and ${\mathcal J}_n \to {\mathcal J}$ as $n\to +\infty$ almost surely. 
The rest of the proof is devoted to extending certain operators to Hilbert spaces. 
We denote 
\begin{align*}
{\mathcal P}^{(n)} f(x)&=\e\Big[ e^{-{\mathcal I}_n}
f(X_{\mathfrak s}) | X_0=x \Big],\\
{\mathcal Q}^{(n)} g(y)&=\e\Big[ e^{-{\mathcal J}_n}g(Y_{\mathfrak t}) | Y_0=y \Big],
\end{align*}
and 
\begin{align*}
{\mathcal P} f(x)&=\e\Big[ e^{-{\mathcal I}}
f(X_{\mathfrak s}) | X_0=x \Big] = \mathsf F(x),\\
{\mathcal Q} g(y)&=\e\Big[ e^{-{\mathcal J}}g(Y_{\mathfrak t}) | Y_0=y \Big] = \mathcal Q\calF f(y). 
\end{align*}
The above expectations are well-defined for bounded measurable functions $f$ and $g$ (the random variables ${\mathcal I}_n$, ${\mathcal J}_n$,
${\mathcal I}$ and ${\mathcal J}$
 have positive real parts).  
 
Our goal is to show first that $\mathcal P$ can be extended to an operator on $L^2(\calX,\mu)$ and then to show $\mathsf G = \calF \mathsf F$. For the second step, it suffices to show the intertwining relation (see \cite{Patie_et_al_2019})
 \[
 \calF \mathcal P = \mathcal Q \calF
 \]
 on $L^2(\calY,\nu)$. 

We first recall the following facts: 
\begin{itemize}
 \item[(i)] ${\mathcal P}^{(n)}$ can be extended as an operator 
on $L^2(\calX, \mu)$ which satisfies $\|{\mathcal P}_n f\|_2\le \|f\|_2$ for all $f\in L^2(\calX, \mu)$ (here the norm comes from $L^2(\calX, \mu)$). This can be seen by the argument presented in Remark \ref{rem:f} with 
$\xi=\exp(-{\mathcal I}_n)$ and $C=1$. 
Similar property holds for ${\mathcal Q}^{(n)}$ and $L^2(\calY, \nu)$. \item[(ii)] For any bounded $f$ and every $x\in \calX$ we have 
\[
{\mathcal P}^{(n)} f(x) \to {\mathcal P} f(x), \;\;\; n\to +\infty,
\]
and similarly for every bounded $g$ and every $y\in \calY$ 
$$
{\mathcal Q}^{(n)} g(y) \to {\mathcal Q} g(y), \;\;\; n\to +\infty. 
$$
This follows from the Dominated Convergence Theorem. 
\end{itemize}
Now take $f\in L^2(\calX, \mu)$ and for $l>0$ define $f_l(x)=f(x){\mathbf 1}_{\{|f(x)|\le l\}}$. In the rest of the proof we write $\|\cdot\|_2 = \|\cdot\|_{L^2(\calX,\mu)}$. Applying triangle inequality and property (i) above, we get
$$
\| {\mathcal P}^{(n)} f-{\mathcal P}^{(m)} f\|_2 \le \| {\mathcal P}^{(n)} f_l-{\mathcal P}^{(m)} f_l\|_2+2 \|f-f_l\|_2.
$$
The second term $\|f-f_l\|_2 $ converges to zero as $l\to +\infty$. 
Next, we estimate
\begin{align*}
\| {\mathcal P}^{(n)} f_l-{\mathcal P}^{(m)} f_l\|^2_2 &=
\int_{\calX}  \e\Big[ (e^{-{\mathcal I}_n}-e^{-{\mathcal I}_m})
f_l(X_{\mathfrak s}) | X_0=x \Big]^2 \mu(\d x)\\
&\le 
\int_{\calX}  \e\Big[ (e^{-{\mathcal I}_n}-e^{-{\mathcal I}_m})^2 |X_0=x\Big]
{\mathbb E}\Big[f_l(X_{\mathfrak s})^2 | X_0=x \Big] \mu(\d x).
\end{align*}
The function $x\mapsto {\mathbb E}[f_l(X_{\mathfrak s})^2 | X_0=x \Big]$ is in $L^1(\calX, \mu)$, due to our assumption that the measure $\mu$ is excessive: 
$$
\int_{\calX} {\mathbb E}[f_l(X_{\mathfrak s})^2 | X_0=x \Big] \mu(\d x)
\le \int_{\calX} f_l(x)^2 \mu(\d x) \le \|f\|_2^2. 
$$
Since $e^{-{\mathcal I}_n} \to e^{-{\mathcal I}}$ as $n\to \infty$ almost surely and all these random variables are bounded by 1, the function $x\to  \e[ (e^{-I_n}-e^{-I_m})^2 |X_0=x]$ is bounded by one and converges almost everywhere to zero as both $m,n\to +\infty$. 
By the Dominated Convergence Theorem, we conclude that 
$$
\int_{\calX}  \e\Big[ (e^{-{\mathcal I}_n}-e^{-{\mathcal I}_m})^2 |X_0=x\Big]
{\mathbb E}\Big[f_l(X_{\mathfrak s})^2 | X_0=x \Big] \mu(\d x)
$$
converges to zero as $m,n\to +\infty$. This shows that the sequence of $L^2(\calX,\mu)$ functions ${\mathcal P}^{(n)} f$ is Cauchy and thus converges in $L^2(\calX,\mu)$.  Thus we have shown that ${\mathcal P}^{(n)}$ converges to some operator $\tilde {\mathcal P}$ 
 in strong operator topology. The statement in item (ii) above implies that 
for every bounded $f \in L^2(\calX,\mu)$ we have 
${\mathcal P}^{(n)} f(x) \to {\mathcal P} f(x)$. Since such bounded functions are dense in $L^2(\calX, \mu)$, it means that 
$\tilde {\mathcal P}$ is the unique extension of ${\mathcal P}$ to $L^2(\calX, \mu)$, and we will denote this extension by the same symbol ${\mathcal P}$. 
The same argument shows that ${\mathcal Q}^{(n)}$ converges (in strong operator topology) to  the unique extension of the operator ${\mathcal Q}$ on $L^2(\calY, \nu)$. Thus we have obtained  extensions of the operators ${\mathcal P}$ and ${\mathcal Q}$ to the corresponding Hilbert spaces. 
Then,  for every $f \in L^2(\calX, \mu)$ by taking the limit of both sides of ${\mathcal F} {\mathcal P}^{(n)} f={\mathcal Q}^{(n)} {\mathcal F} f$ (the latter statement follows from Theorem 
\ref{thm0}),  we have the intertwining relationship ${\mathcal F} {\mathcal P} f={\mathcal Q} {\mathcal F} f$.
\end{proof}

\section{Examples}\label{section_examples}


In this section we present examples to illustrate the general results presented in 
Theorems \ref{thm0} and \ref{thm1}. We discuss L\'evy processes, Brownian excursions and meanders, and an example connecting the CIR diffusion process with a birth and death process. 


\subsection{L\'evy processes}\label{subsection_Levy_processes}


We consider a L\'evy process $X=\{X_s\}_{s\ge 0}$ with characteristic exponent $\Phi$, that is
$$
\e[e^{\i y X_s}\vert X_0 = 0]=e^{-s \Phi(y)}, \quad s\ge 0, y\in\mathbb R,
$$
and another L\'evy process $Y=\{Y_t\}_{t\ge 0}$ with characteristic exponent $\Psi$. Note that in our discussions a L\'evy process may start from a non-zero fixed value.
We take $L^2(\calX,\mu) = L^2(\calY,\nu) = L^2(\mathbb R,\d x)$, $\bfS=\bfT=[0,\infty)$ and consider a unitary operator ${\mathcal F}$ on the Hilbert space $L^2({\mathbb R}, \d x)$ given by 
the Fourier transform 
$$
{\mathcal F}f(z)=\frac{1}{\sqrt{2\pi}} \int_{\r} f(x) e^{-\i z x} \d x.
$$
 More precisely, the Fourier transform operator ${\mathcal F}$ is initially defined for Schwartz functions $f$ via the above integral formula
and then it is extended as a unitary operator on $L^2(\r, \d x)$. 
We set $h_s(x)=j_t(y)=1$ for all $s,t\ge 0$ and $x,y\in {\mathbb R}$. 
It is well-known that the transition operators of a L\'evy process can be extended to a contraction semigroup on $L^2({\mathbb R}, \d x)$ and that the Fourier transform diagonalizes these transition operators: 
$$
\calP_sf(x) = \e[f(X_s)|X_0=x]=\e[f(x+X_s)|X_0=0]=\big[{\mathcal F}^{-1} e^{- s \Phi} {\mathcal F}  f\big] (x), \;\;\; f\in L^2({\mathbb R}, \d x).
$$
For the derivation of the above equality, see  \cite{Jacob2001} or Exercise 
3.4.3 in \cite{Applebaum_2009}. We emphasize that the expectations such as  $\e[f(X_s)|X_0=x]$ are understood in the sense described in Remark \ref{rem:f}.
Let us denote by $\hat Y$ the dual process of $Y$, that is $
\hat Y_t=-Y_t$. We have the following identity for the dual process 
$$
\calQ_tg(y) = \e[g(\hat Y_t)| \hat Y_0=y]=\e[g(-y-Y_t)|Y_0=0]=\big[{\mathcal {F}} e^{- t \Psi} {\mathcal F}^{-1} g\big](y), \;\;\; g\in L^2({\mathbb R}, \d x).
$$
This can be checked by noting that ${\mathcal F}^{-1}={\mathcal {S F}}={\mathcal {F S}}$, where ${\mathcal S}$ the operator ${\mathcal S} f(x)=f(-x)$. Now we can state and prove the following result.

\begin{proposition}\label{prop_Levy1}
 Assume that $s=s(w)  : [0,1] \to  [0,\infty)$ and $t=t(w)  : [0,1] \to [0,\infty)$ are two increasing and continuous from the right differentiable functions, such that $s(0)=t(0)=0$. For $f \in L^2({\mathbb R}, \d x)$ we define
 \begin{align}
     \label{def_R_Levy}
     {\mathsf F}(x) & =\e\Big[ e^{- \int_{(0,1]} \Psi(X_{s(w-)})\d t(w)} f( X_{s(1)})| X_0=x\Big],\\
     \nonumber
    {\mathsf G}(y) & =\e \Big[ e^{- \int_{(0,1]} \Phi(\hat Y_{t(w)})\d s(w)}
{\mathcal F} f(\hat Y_{t(1)}) | \hat Y_0=y\Big].
\end{align}     
Then functions ${\mathsf F}$ and ${\mathsf G}$ are in  $L^2({\mathbb R}, \d x)$ and $
{\mathsf G}={\mathcal F} {\mathsf F}$.
\end{proposition}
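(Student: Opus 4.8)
The plan is to obtain Proposition~\ref{prop_Levy1} as a direct application of Theorem~\ref{thm1}, after checking that the present L\'evy setting is an instance of Assumption~\ref{assump:1} together with the extra hypotheses demanded by that theorem. Concretely, I would take both Hilbert spaces to be $L^2(\r,\d x)$, let $\calF$ be the Fourier transform, set $h_s=j_t=1$, and make the identifications $\phi=\Phi$ and $\psi=\Psi$. The process denoted $Y$ in Theorem~\ref{thm1} is taken to be the dual process $\hat Y$, so that the two expectations $\mathsf F$ and $\mathsf G$ in the statement match verbatim those in Theorem~\ref{thm1}, using $s(1)=\mathfrak s$, $t(1)=\mathfrak t$, and $g=\calF f$; all expectations are read in the extended $L^2$ sense of Remark~\ref{rem:f}.

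First I would verify the three clauses of Assumption~\ref{assump:1}. Clause~(i) is standard: $\calF$ is unitary on $L^2(\r,\d x)$ by Plancherel's theorem. For clause~(ii), $\Phi$ and $\Psi$ are continuous characteristic exponents, and they take values in $\c^+$ because $|e^{-s\Phi(y)}|=|\e[e^{\i y X_s}\vert X_0=0]|\le 1$ forces $\re\,\Phi(y)\ge 0$, and likewise $\re\,\Psi(y)\ge 0$. Clause~(iii) holds trivially with $h_s\equiv j_t\equiv 1$. The required spectral representations \eqref{eqn:semigroup_X}--\eqref{eqn:semigroup_Y} are exactly the two displayed identities $\calP_s f=\calF^{-1}e^{-s\Phi}\calF f$ and $\calQ_t g=\calF e^{-t\Psi}\calF^{-1}g$ established immediately above the proposition, the latter via $\calF^{-1}=\mathcal S\calF=\calF\mathcal S$ with $\mathcal S$ the reflection operator and $\hat Y=-Y$.

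Next I would check the additional hypotheses of Theorem~\ref{thm1}. We already have $h_s=j_t=1$. The remaining structural point is that Lebesgue measure is excessive for both $X$ and $\hat Y$; in fact it is invariant, since for a L\'evy process $\int_{\r}\calP_s f(x)\,\d x=\e\big[\int_{\r} f(x+X_s)\,\d x\big]=\int_{\r} f(u)\,\d u$ by translation invariance of $\d x$, and the same computation applies to $\hat Y$. Finally, $s$ and $t$ are increasing and right-continuous (differentiability is stronger than what Theorem~\ref{thm1} requires), with $s(0)=t(0)=0$ and finite maxima $\mathfrak s=s(1)$, $\mathfrak t=t(1)$ attained by continuity on the compact interval $[0,1]$. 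With all hypotheses in place, Theorem~\ref{thm1} yields $\mathsf F\in L^2(\r,\d x)$ and $\mathsf G=\calF\mathsf F$, which is the claim.

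The proof carries no deep difficulty: the mathematical content is supplied by Theorem~\ref{thm1}, and the work here is verification and bookkeeping. The one step deserving care is the correct matching of the dual representation, namely that $\Phi$ acts on the $\hat Y$-process while $\Psi$ acts on the $X$-process, and that it is the semigroup of the \emph{dual} process $\hat Y$ (rather than $Y$ itself) that is diagonalized in the form $\calF e^{-t\Psi}\calF^{-1}$ required by Assumption~\ref{assump:1}; this is precisely where the reflection identity $\calF^{-1}=\mathcal S\calF=\calF\mathcal S$ is essential. I expect the excessivity verification, though elementary, to be the only place where a genuine structural property of L\'evy processes (translation invariance) is used rather than pure formula-matching.
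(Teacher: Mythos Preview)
Your proposal is correct and follows exactly the same approach as the paper: the paper's proof is a single line invoking Theorem~\ref{thm1} together with the fact that Lebesgue measure is invariant for any L\'evy process. Your verification of Assumption~\ref{assump:1} and of the excessivity hypothesis simply spells out what the paper leaves implicit.
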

\begin{proof}
We apply Theorem \ref{thm1} and use the fact that the Lebesgue measure is invariant for any L\'evy process. 
\end{proof}

\begin{remark}
We would like to present an alternative method for deriving the result of Proposition \ref{prop_Levy1} in the case when both $s(w)$ and $t(w)$ are differentiable functions and $f$ is a Schwartz class function.  We will only sketch the main ideas in the argument, without trying to justify all the steps. For $v\in [0,1]$ and $x \in \r$ we define 
\begin{equation*}
     {\mathsf F}(v,x)=\e\Big[ e^{- \int_0^v \Psi(X_{s(w)})\d t(w)} f( X_{s(v)})| X_0=x\Big].
 \end{equation*}
The function ${\mathsf F}(v,y)$ solves the backward Kolmogorov equation 
\begin{equation}\label{eq:PIDE_for_R}
\partial_v {\mathsf F}=-(t'(v) \Psi + s'(v) {\mathcal L}_X ) {\mathsf F},
\end{equation}
with the initial condition ${\mathsf F}(0,x)=f(x)$. Here ${\mathcal L}_X$ is the Markov generator of the L\'evy process $X$.  It is known that the operator ${\mathcal L}_X$ can be diagonalized via Fourier transform: 
$$
{\mathcal L}_X={\mathcal F}^{-1} \Phi {\mathcal F},
$$
thus the partial integro-differential equation (PIDE) \eqref{eq:PIDE_for_R} can be written in the form 
$$
\partial_v \mathsf F=-(t'(v) \Psi + s'(v) {\mathcal F}^{-1} \Phi {\mathcal F}) \mathsf F. 
$$
Defining ${\mathsf G}(v,y)={\mathcal F} [{\mathsf F}(v,\cdot)](y)$ (here the Fourier transform acts on $x$-variable) and recalling that ${\mathcal L}_{\hat Y}={\mathcal F} \Psi {\mathcal F}^{-1}$  we obtain from the above equation
\begin{equation}
\label{eq:PIDE_for_L}
\partial_v {\mathsf G}=-(t'(v) {\mathcal F} \Psi {\mathcal F}^{-1} + s'(v)  \Phi ) {\mathsf G}=
-( t'(v) \mathcal L_{\hat Y}+s'(v) \Phi ){\mathsf G}. 
\end{equation}
We recognize \eqref{eq:PIDE_for_L} as a backward Kolmogorov equation 
whose solution, with the initial condition ${\mathsf G}(0,y)={\mathcal F} f(y)$, is given by 
 \begin{equation*}
    {\mathsf G}(v,y)=\e \Big[ e^{- \int_0^v \Phi(\hat Y_{t(w)})\d s(w)}
{\mathcal F} f(\hat Y_{t(v)}) | \hat Y_0=y\Big],
\end{equation*}   
and we recover the statement of Proposition 
\ref{prop_Levy1}. 

We would like to emphasize that the above is not a rigorous proof (at the very least, one would need to ensure the uniqueness of solutions of the PIDEs and justify the interchange of partial derivative $\partial_v$ and Fourier transform), however, we feel that this discussion merits inclusion because it highlights the ideas involved in duality identities. 
\end{remark}

As an application of Proposition \ref{prop_Levy1}, we derive an expression for the joint Laplace--Fourier transform 
$$
{\mathbb E}\big[  e^{\i \alpha X_s+\i \beta L_1(s)-\lambda L_2(s)} \vert X_0=0\big], 
$$
of the triple $(X_s,L_1(s),L_2(s))$, where $X$ is a L\'evy process with characteristic exponent $\Phi$ and 
$$
L_k(s):=\int_0^s X_u^k \; \d u, \quad k=1,2 \;\; {\textnormal{ and }} \;s>0.
$$
We define the kernel $p_t(y_1,y_2)$  via 
$$
\e\Big[  e^{-\int_0^t \Phi(\sqrt{2\lambda} W_u - \beta u) \d u} 
{\mathbf 1}_{\{ \sqrt{2\lambda} W_t - \beta t \in A\}} |W_0=y_1 \Big]
= \int_A p_t(y_1,y_2) \d y_2,
$$
where $W$ is the standard Brownian motion and $A$ a Borel set of $\mathbb R$. In the case when the L\'evy process $X$ is symmetric so that $\Phi$ is a positive function on $\r$, we can identify $p_t(y_1,y_2)$ as the transition probability density of the Brownian motion with drift 
$\{\sqrt{2\lambda}W_t - \beta t\}_{t\ge 0}$ killed at rate $\Phi(y)$. Note that the kernel $p_t$ depends on both $\beta$ and $\Phi$.

In the following two corollaries we need to use identity ${\mathsf F}={\mathcal F}^{-1} {\mathsf G}$ in pointwise sense. We justify it as follows: we show that ${\mathsf G} \in L_1(\r, \d y)$ and ${\mathsf F}$ is continuous. Therefore ${\mathcal F}^{-1} {\mathsf G}$ is also continuous and we have two continuous functions that are equal almost everywhere -- thus they are equal for every $x$. 

\begin{corollary}\label{corollary2} 
For $\alpha, \beta \in \r$ and $s, \lambda >0$ we have
\begin{equation}\label{corollary2:eq1}
{\mathbb E}\big[  e^{\i \alpha X_s+\i \beta L_1(s)-\lambda L_2(s)} \vert X_0=0\big]=\int_{\r} p_s(y,\alpha) \d y. 
\end{equation}
\end{corollary}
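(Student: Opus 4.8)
The plan is to read off \eqref{corollary2:eq1} from Proposition~\ref{prop_Levy1} by specializing the second process $Y$ to a Brownian motion with drift and then evaluating the intertwining relation ${\mathsf F}={\mathcal F}^{-1}{\mathsf G}$ at the single point $x=0$. First I would take $Y_t=\sqrt{2\lambda}\,W_t+\beta t$, whose characteristic exponent is $\Psi(y)=\lambda y^2-\i\beta y$ (note $\re\Psi\ge 0$, as the framework requires), and I would choose the two time changes to be equal and linear, $s(w)=t(w)=sw$, so that $s(1)=t(1)=s$. With these choices the exponential weight in ${\mathsf F}$ collapses to
\[
-\int_{(0,1]}\Psi(X_{s(w-)})\,\d t(w)=-\int_0^s\Psi(X_u)\,\d u=\i\beta L_1(s)-\lambda L_2(s),
\]
so that ${\mathsf F}(x)=\e\big[e^{\i\beta L_1(s)-\lambda L_2(s)}f(X_s)\,\big|\,X_0=x\big]$. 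On the dual side $\hat Y_t=-Y_t$ is again a drifted Brownian motion, with $\hat Y\overset{d}{=}\{\sqrt{2\lambda}W_u-\beta u\}$ by symmetry of $W$, whose killing at rate $\Phi$ is exactly the one defining $p_s$; hence ${\mathsf G}(y)=\int_{\r}({\mathcal F}f)(z)\,p_s(y,z)\,\d z$.

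To produce the factor $e^{\i\alpha X_s}$ the natural (formal) choice is $f(x)=\tfrac{1}{\sqrt{2\pi}}e^{\i\alpha x}$, for which ${\mathcal F}f=\delta_\alpha$, whence ${\mathsf G}(y)=p_s(y,\alpha)$ and, evaluating ${\mathsf F}={\mathcal F}^{-1}{\mathsf G}$ at $x=0$,
\[
\tfrac{1}{\sqrt{2\pi}}\,\e\big[e^{\i\alpha X_s+\i\beta L_1(s)-\lambda L_2(s)}\,\big|\,X_0=0\big]={\mathsf F}(0)=\tfrac{1}{\sqrt{2\pi}}\int_{\r}{\mathsf G}(y)\,\d y=\tfrac{1}{\sqrt{2\pi}}\int_{\r}p_s(y,\alpha)\,\d y,
\]
which is \eqref{corollary2:eq1} after cancelling $1/\sqrt{2\pi}$. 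The difficulty is that $f(x)=\tfrac{1}{\sqrt{2\pi}}e^{\i\alpha x}\notin L^2(\r,\d x)$ (its transform is a Dirac mass), so Proposition~\ref{prop_Levy1} does not apply to it directly.

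I would therefore regularize by setting $f_\varepsilon(x)=\tfrac{1}{\sqrt{2\pi}}e^{\i\alpha x-\varepsilon x^2/2}\in L^2(\r,\d x)$, so that $g_\varepsilon={\mathcal F}f_\varepsilon(z)=\tfrac{1}{\sqrt{2\pi\varepsilon}}e^{-(z-\alpha)^2/(2\varepsilon)}$ is a Gaussian approximate identity converging to $\delta_\alpha$ as $\varepsilon\to0$. For each fixed $\varepsilon>0$ Proposition~\ref{prop_Levy1} applies, and I would invoke the pointwise Fourier inversion justified in the paragraph preceding the corollary (namely ${\mathsf G}_\varepsilon\in L^1(\r,\d y)$ and ${\mathsf F}_\varepsilon$ continuous) to write ${\mathsf F}_\varepsilon(0)=\tfrac{1}{\sqrt{2\pi}}\int_{\r}{\mathsf G}_\varepsilon(y)\,\d y$. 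Letting $\varepsilon\to0$: on the left, dominated convergence gives ${\mathsf F}_\varepsilon(0)\to\tfrac{1}{\sqrt{2\pi}}\e[e^{\i\alpha X_s+\i\beta L_1(s)-\lambda L_2(s)}\mid X_0=0]$, since the integrand is bounded by $e^{-\lambda L_2(s)}\le1$ and $e^{-\varepsilon X_s^2/2}\to1$; on the right, Fubini rewrites $\int_{\r}{\mathsf G}_\varepsilon(y)\,\d y=\int_{\r}g_\varepsilon(z)\,H(z)\,\d z$ with $H(z):=\int_{\r}p_s(y,z)\,\d y$, and the approximate-identity property yields the limit $H(\alpha)=\int_{\r}p_s(y,\alpha)\,\d y$.

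The main obstacle is precisely this passage to the limit through the Dirac mass: one must verify, for each $\varepsilon$, the integrability ${\mathsf G}_\varepsilon\in L^1(\r,\d y)$ and the continuity of ${\mathsf F}_\varepsilon$ that underpin the pointwise inversion, and then control $\int_{\r}g_\varepsilon(z)\,H(z)\,\d z$, for which it suffices that $H$ be continuous and locally bounded near $z=\alpha$. Tracking the constant $1/\sqrt{2\pi}$ through the change of variables implicit in the definition of $p_s$ (whether $\hat Y_0=y$ is matched to $W_0=y$ or to $W_0=y/\sqrt{2\lambda}$) is routine bookkeeping by comparison. Everything else — the identification of $\Psi$, the telescoping of the exponential weight, and the matching of the $\hat Y$-functional with the kernel $p_s$ — takes place within the already-established Proposition~\ref{prop_Levy1}.
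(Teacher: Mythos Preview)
Your proposal is correct and essentially identical to the paper's proof: both specialize $Y$ to drifted Brownian motion so that $\Psi(x)=\lambda x^2-\i\beta x$, apply Proposition~\ref{prop_Levy1} with the linear time changes $s(w)=t(w)=sw$, and justify the pointwise identity ${\mathsf F}(0)=\tfrac{1}{\sqrt{2\pi}}\int_\r{\mathsf G}(y)\,\d y$ via continuity of ${\mathsf F}$ and $L^1$-integrability of ${\mathsf G}$. The only cosmetic difference is that the paper tests against an arbitrary Schwartz function $g$ and concludes by density, whereas you choose the specific Gaussian family $g_\varepsilon\to\delta_\alpha$ and pass to the limit; both routes rest on the same verifications, and the continuity of $H(z)=\int_\r p_s(y,z)\,\d y$ that you flag is implicitly needed in the paper's density argument as well.
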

\begin{proof}
We take $Y_t=-\sqrt{2\lambda}W_t + \beta t$, so that 
$\hat Y_t = \sqrt{2\lambda}W_t - \beta t$ and 
$
\Psi(x)=\lambda x^2 - \i \beta x.
$
We fix an arbitrary Schwartz class function $g(y)$ and set $f={\mathcal F}^{-1} g$  and we also take $s(w)=t(w)=vw$. 
Denote
$$
{\mathsf F}(x):=\e\Big[e^{-\lambda L_2(v)+\i\beta L_1(v)}f(X_v)|X_0=x\Big]
$$
and 
$$
{\mathsf G}(y):=\e\Big[  e^{-\int_0^v \Phi(\sqrt{2\lambda} W_u - \beta u) \d u} 
g(\sqrt{2\lambda} W_v - \beta v) |W_0=y \Big]. 
$$
Proposition \ref{prop_Levy1} implies ${\mathsf F}= {\mathcal F}^{-1} {\mathsf G}$. Next we show that ${\mathsf F}$ is a contiuous function 
and ${\mathsf G} \in L^1 (\r ,\d y)$, so that the identity ${\mathsf F}= {\mathcal F}^{-1} {\mathsf G}$ holds pointwise, in particular it gives us
\begin{equation}\label{eq115}
{\mathsf F}(0)=\frac{1}{\sqrt{2\pi}} \int_{\r} {\mathsf G}(y) \d y. 
\end{equation}
To show that ${\mathsf F}$ is continuous, we use translation invariance of the L\'evy process $X$ and write 
$$
{\mathsf F}(x)=\e\Big[e^{-\lambda \int_0^v (x+X_u)^2 \d u +\i\beta (x+\int_0^v X_u \d u)}f(x+X_v)|X_0=0\Big]
$$
The term 
$$
e^{-\lambda \int_0^v (x+X_u)^2 \d u +\i\beta (x+\int_0^v X_u \d u)}
$$
is continuous in $x$ and bounded by one in absolute value. Similarly $f(x+X_v)$ is continuous in $x$ and bounded by some constant (since $f$ is Schwartz function -- thus bounded and continuous). By the dominated convergence theorem we get (for any $x_0 \in \r$): 
${\mathsf F}(x) \to {\mathsf F}(x_0)$ as $x\to x_0$, so ${\mathsf F}$ is  continuous. 

We deal with the function ${\mathsf G}$ in a similar way: we write
$$
{\mathsf G}(y)=\e\Big[  e^{-\int_0^v \Phi(\sqrt{2\lambda} (y+W_u) - \beta u) \d u} 
g(\sqrt{2\lambda} (y+W_v) - \beta v) |W_0=0 \Big],
$$
use the fact that $\Phi$ has a positive real part, so that the term
$$
e^{-\int_0^v \Phi(\sqrt{2\lambda} (y+W_u) - \beta u) \d u}
$$
is bounded by one in absolute value, and then compute (applying Fubini Theorem)
\begin{align*}
\int_{\r} |{\mathsf G}(y)| \d y 
&\le \int_{\r} \e\Big[  
|g(\sqrt{2\lambda} (y+Z) - \beta v)| \Big] \d y\\
&=\e\Big[  \int_{\r} |g(\sqrt{2\lambda} (y+Z) - \beta v) \d y \Big]= \frac{1}{\sqrt{2\lambda}} \int_{\r} g(y) \d y.
\end{align*}
Here we denoted by $Z$ a $N(0,v)$ random variable. Since $g$ is in Schwartz class, it is in $L^1(\r, \d x)$ and we see that ${\mathsf G}$ is also in $L^1(\r, \d x)$. 

Next, we introduce 
$$
F_v(\alpha,\beta,\lambda):={\mathbb E}\big[  e^{\i \alpha X_v+\i \beta L_1(v)-\lambda L_2(v)} \vert X_0=0\big].
$$
 From \eqref{def_R_Levy} we find 
\begin{align}\label{corollary2_eqn2}
{\mathsf F}(0) & = \e\Big[e^{-\lambda L_2(v)+\i\beta L_1(v)}f(X_v)|X_0=0\Big]\nonumber\\
& = \e\Big[ e^{-\lambda L_2(v)+\i\beta L_1(v)}\frac1{\sqrt{2\pi}}\int_\r e^{\i yX_v}g(y)\d y|X_0 = 0\Big] = \frac{1}{\sqrt{2\pi}} \int_{\r} F_v(y,\beta,\lambda)  g(y) \d y,
\end{align}
where in the last step we applied Fubini's theorem.
On the other hand, from \eqref{eq115} we have 
\begin{align}
\nonumber
{\mathsf F}(0)&=\frac{1}{\sqrt{2\pi}} \int_{\r} 
\e\Big[  e^{-\int_0^v \Phi(\sqrt{2\lambda} W_u - \beta u) \d u} 
g(\sqrt{2\lambda} W_v - \beta v) |W_0=y_1 \Big] \d y_1\\
\label{corollary2_eqn3}
&=\frac{1}{\sqrt{2\pi}} \int_{\r} \int_{\r} p_v(y_1,y_2) g(y_2) \d y_1 \d y_2.
\end{align}
Since the right-hand sides in both \eqref{corollary2_eqn2} and 
\eqref{corollary2_eqn3} are equal for all Schwartz functions $g$, we conclude that \eqref{corollary2:eq1} holds. 
\end{proof}

We also use Proposition \ref{prop_Levy1} to obtain the following result about exponential functionals of L\'evy processes.

\begin{corollary}\label{corollary3} Let $N=\{N_t\}_{t\ge 0}$ be a standard Poisson process. For any L\'evy process $X$ with characteristic exponent $\Phi$ and any $\alpha \in \r$  and $\lambda,v>0$ we have
 \begin{equation}\label{eq:Laplace_transform_exp_functional}
\e\Big[  e^{\lambda\int_0^v e^{\i \alpha  X_u
 }\d u} |X_0=0 \Big]
=e^{\lambda v}
\sum\limits_{n\ge 0}
\e\Big[  e^{- \frac{1}{\lambda}\int_0^{\lambda v} \Phi(\alpha (n- N_u)) \d u} 
{\mathbf 1}_{\{N_{\lambda v}=n\}}|N_0=0 \Big].
\end{equation}
\end{corollary}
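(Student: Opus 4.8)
The plan is to apply Proposition~\ref{prop_Levy1} with $X$ the given L\'evy process (characteristic exponent $\Phi$) and with $Y$ the scaled Poisson process $Y_t=\alpha N_t$, whose characteristic exponent is $\Psi(y)=1-e^{\i\alpha y}$, so that $\hat Y_t=-\alpha N_t$. I would take the time changes $s(w)=vw$ and $t(w)=\lambda v w$ on $[0,1]$. A direct substitution then gives, on the $X$-side,
\[
-\int_{(0,1]}\Psi(X_{s(w-)})\,\d t(w)=-\lambda v+\lambda\int_0^v e^{\i\alpha X_u}\,\d u,
\]
and, on the $\hat Y$-side started from $\hat Y_0=y$ (so that $\hat Y_r=y-\alpha N_r$),
\[
\int_{(0,1]}\Phi(\hat Y_{t(w)})\,\d s(w)=\frac1\lambda\int_0^{\lambda v}\Phi(y-\alpha N_r)\,\d r,\qquad \hat Y_{t(1)}=y-\alpha N_{\lambda v}.
\]
Thus for a terminal function $f$ the proposition gives $\mathsf F=\calF^{-1}\mathsf G$ with $\mathsf F(x)=e^{-\lambda v}\e\big[e^{\lambda\int_0^v e^{\i\alpha X_u}\d u}f(X_v)\,|\,X_0=x\big]$ and $\mathsf G(y)=\e\big[e^{-\frac1\lambda\int_0^{\lambda v}\Phi(y-\alpha N_r)\d r}\calF f(y-\alpha N_{\lambda v})\,|\,N_0=0\big]$.

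The difficulty is that I really want $f\equiv 1$, so that $\mathsf F(0)$ becomes $e^{-\lambda v}$ times the left-hand side of \eqref{eq:Laplace_transform_exp_functional}; but the constant function is not in $L^2(\r,\d x)$. I would resolve this by approximation, taking $f=f_\varepsilon(x)=e^{-\varepsilon x^2/2}\in L^2(\r,\d x)$, whose Fourier transform $\calF f_\varepsilon(z)=\varepsilon^{-1/2}e^{-z^2/(2\varepsilon)}$ is, up to the factor $\sqrt{2\pi}$, an approximate identity concentrating at the origin, with the crucial feature that $\|\calF f_\varepsilon\|_{L^1(\r,\d x)}=\sqrt{2\pi}$ for every $\varepsilon>0$. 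For each fixed $\varepsilon$, exactly as in the proof of Corollary~\ref{corollary2}, I would verify that $\mathsf F_\varepsilon$ is continuous (using translation invariance of $X$ and dominated convergence) and that $\mathsf G_\varepsilon\in L^1(\r,\d y)$ (using $\re\Phi\ge 0$ and Fubini, which bounds $\|\mathsf G_\varepsilon\|_1$ by $\|\calF f_\varepsilon\|_1$). Hence the identity $\mathsf F_\varepsilon=\calF^{-1}\mathsf G_\varepsilon$ holds pointwise, and in particular $\mathsf F_\varepsilon(0)=\frac{1}{\sqrt{2\pi}}\int_\r\mathsf G_\varepsilon(y)\,\d y$.

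It then remains to let $\varepsilon\to 0$ on both sides. On the left, since $|e^{\lambda\int_0^v e^{\i\alpha X_u}\d u}|\le e^{\lambda v}$ and $f_\varepsilon(X_v)\to 1$ boundedly, dominated convergence gives $\mathsf F_\varepsilon(0)\to e^{-\lambda v}\e\big[e^{\lambda\int_0^v e^{\i\alpha X_u}\d u}\,|\,X_0=0\big]$. On the right, writing $\mathsf G_\varepsilon(y)$ as the expectation above and applying Fubini, I would use that for each realization the functional $y\mapsto\exp(-\frac1\lambda\int_0^{\lambda v}\Phi(y-\alpha N_r)\,\d r)$ is continuous and bounded by one, so that the approximate identity $\calF f_\varepsilon$ picks out the diagonal value $y=\alpha N_{\lambda v}$; the interchange of $\e$ with the limit is justified by the uniform bound $\|\calF f_\varepsilon\|_1=\sqrt{2\pi}$ together with dominated convergence. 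This yields
\[
\frac{1}{\sqrt{2\pi}}\int_\r\mathsf G_\varepsilon(y)\,\d y\longrightarrow \e\big[e^{-\frac1\lambda\int_0^{\lambda v}\Phi(\alpha(N_{\lambda v}-N_r))\d r}\,|\,N_0=0\big].
\]

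Equating the two limits gives $\e\big[e^{\lambda\int_0^v e^{\i\alpha X_u}\d u}\,|\,X_0=0\big]=e^{\lambda v}\,\e\big[e^{-\frac1\lambda\int_0^{\lambda v}\Phi(\alpha(N_{\lambda v}-N_r))\d r}\,|\,N_0=0\big]$, and decomposing the final expectation over the value $N_{\lambda v}=n$ (on which $N_{\lambda v}-N_r=n-N_r$) reproduces exactly the sum in \eqref{eq:Laplace_transform_exp_functional}. The main obstacle is precisely this passage $f_\varepsilon\to 1$: one must simultaneously control the limit of $\mathsf F_\varepsilon(0)$ and show that the Gaussians $\calF f_\varepsilon$ act as a Dirac mass inside the expectation defining $\int_\r\mathsf G_\varepsilon$, and it is the scale-invariance of $\|\calF f_\varepsilon\|_1$ that makes the required dominated-convergence argument go through.
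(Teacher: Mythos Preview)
Your proposal is correct and follows essentially the same approach as the paper: apply Proposition~\ref{prop_Levy1} with the Poisson process as the second L\'evy process, use the Gaussian $f_\varepsilon(x)=e^{-\varepsilon x^2/2}$ as a surrogate for $f\equiv 1$, verify pointwise equality via continuity of $\mathsf F_\varepsilon$ and $L^1$-integrability of $\mathsf G_\varepsilon$, and pass to the limit using that $\calF f_\varepsilon/\sqrt{2\pi}$ is an approximate identity. The only cosmetic difference is that the paper first reduces to $\alpha=1$ by scaling $X\mapsto\alpha X$ and decomposes $\mathsf G_\varepsilon$ over $\{N_{\lambda v}=n\}$ before taking $\varepsilon\to0$, whereas you carry $\alpha$ through and decompose afterward.
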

\begin{proof}
We will prove \eqref{eq:Laplace_transform_exp_functional} in the special case $\alpha=1$. The general case can be obtained by scaling $X \mapsto \alpha X$.
 We take $Y_t=N_t$, $s(w)=vw$ and $t(w)=\lambda v w$ where $\lambda$ and $v$ are positive numbers. Let
    $$
    f(x)=f_{\epsilon}(x):=e^{- \frac{\epsilon}{2} x^2},
    $$
    for $\epsilon>0$. 
 We have $\Psi(x)=1-\exp(\i x)$ as the characteristic exponent of $Y$, and 
    $$
    g(y)=g_{\epsilon}(y):={\mathcal F} f_{\epsilon}(y)=\frac{1}{\sqrt{\epsilon}}e^{-\frac{y^2}{2\epsilon}}.
    $$
    From Proposition \ref{prop_Levy1} we find 
    $$
    {\mathsf F}(x)=e^{-\lambda v} 
    \e\Big[ e^{\lambda \int_0^v e^{\i X_u} \d u} f_{\epsilon}( X_v) | X_0=x\Big]
    $$
    and 
    \begin{align*}
        {\mathsf G}(y)&=\e \Big[ e^{-  \frac{1}{\lambda} \int_0^{\lambda v} \Phi(- N_u)\d u}
g_{\epsilon}(-N_{\lambda v}) | N_0=-y\Big]\\
&= \sum\limits_{n\ge 0} g_{\epsilon}(y-n)
\e \Big[ e^{-  \frac{1}{\lambda} \int_0^{\lambda v} \Phi(-N_u)\d u}
{\mathbf 1}_{\{N_{\lambda v}=n-y\} }| N_0=-y\Big]
= \sum\limits_{n\ge 0} g_{\epsilon}(y-n) H_n(y),
    \end{align*}
    where we denoted 
    $$
    H_n(y):=\e \Big[ e^{-  \frac{1}{\lambda} \int_0^{\lambda v} \Phi(y-N_u)\d u}
{\mathbf 1}_{\{N_{\lambda v}=n\} }| N_0=0\Big].
    $$ 
    As in the proof of Corollary \ref{corollary2}, one can show that ${\mathsf F}$ is continuous and ${\mathsf G} \in L^1(\r,\d y)$, so that the statement ${\mathsf F}= {\mathcal F}^{-1} {\mathsf G}$ (which follows from Proposition \ref{prop_Levy1}) holds pointwise. Thus we obtain 
\[
    {\mathsf F}(0)=\frac{1}{\sqrt{2\pi}} \int_{\mathbb R} {\mathsf G}(y) \d y=
    \sum_{n\ge 0} \frac{1}{\sqrt{2\pi}}
    \int_{\mathbb R}g_{\epsilon}(y-n) H_n(y) \d y.
\]
   Note that $\lim_{\epsilon\downarrow0} f_\epsilon(x)=1$ and $g_{\epsilon}(y-n)/\sqrt{2\pi}$ is the density of normal random variable with mean $n$ and variance $\epsilon$. The functions $H_n(y)$ are bounded and continuous, thus we can take the limit as $\epsilon \to 0^+$ and obtain \eqref{eq:Laplace_transform_exp_functional}. 
\end{proof}

Corollary \ref{corollary3} could be interesting for applications in the study of exponential functionals of L\'evy processes -- these are random variables of the form 
$$
I_v=\int_0^v e^{\alpha X_u} \d u. 
$$
Assume we managed to compute the right-hand side of 
\eqref{eq:Laplace_transform_exp_functional} in closed form. Then by performing analytic continuation in $\lambda$ and then in $\alpha$ we could obtain an expression for 
$$
\e\Big[  e^{\i \lambda\int_0^v e^{\alpha  X_u
 }\d u} |X_0=0 \Big],
$$
which is the characteristic function of $I_v$. We leave investigation of applications of Corollary \ref{corollary3} to the study of exponential functionals of L\'evy processes for future research.

\subsection{Brownian excursion and meander}\label{subsection_Brownian_excursion}


 We consider the Markov process  $X=\{X_s\}_{0\le s \le 1}$ on $\calX=[0,\infty)$ as the normalized Brownian excursion. This is an inhomogeneous Markov process with $X_0 = 0$ and $X_1 = 0$ fixed, and  with transition probability density function
\[
p_{s_1,s_2}(x_1,x_2)=
\begin{cases}
\sqrt{8\pi} h_{s_2}(x_2) h_{1-s_2}(x_2),& \mbox{ if }  \; 0=s_1<s_2<1, \; x_1=0, \; x_2>0,\\
\\
\displaystyle
\frac{h_{s_2}(x_2)}{h_{s_1}(x_1)}\tilde p_{s_2-s_1}(x_1,x_2), & \mbox{ if } \; 0<s_1<s_2<1, \; x_1,x_2>0,
\end{cases}
\]
where
\[
\tilde p_t(x_1,x_2) =  \frac{1}{\sqrt{2\pi t}}
e^{-\frac{(x_1-x_2)^2}{2t}}-
\frac{1}{\sqrt{2\pi t}} 
e^{-\frac{(x_1+x_2)^2}{2t}}, t>0, x_1,x_2>0,
\]
is the transition (sub-)probability density function of the Brownian motion killed when hitting zero, and
\begin{equation}\label{h_s_Brownian_Excursion}
h_s(x)=\frac{x}{\sqrt{2\pi (1-s)^3}} e^{-
\frac{x^2}{2(1-s)}}, \quad s\in(0,1), x>0.
\end{equation}
Note that the functions $h_s$ play the role of Doob's $h$-transform. 
Furthermore, 
the kernel of killed Brownian motion  $\tilde p_{t}(x_1,x_2)$ has spectral representation
$$
\tilde p_{t}(x_1,x_2)= \frac{2}{\pi} \int_0^{\infty} e^{-\frac12t {y^2}} \sin(x_1 y) \sin( x_2 y) \d y.
$$
This formula can be easily checked by expressing the product of sine functions as a sum of cosine functions and computing the resulting integral, see \cite{Bryc_Wang_2018}. 
 To put the process $X$ into our framework, we consider the Hilbert space $L^2(\calX,\mu) = L^2((0,\infty), \d x)$, take $\phi(y)=y^2/2$ and $\calF$ as the Fourier sine transform, which is initially defined for smooth functions $f: (0,\infty)  \mapsto \c$ with compact support  via the integral representation
$$
{\mathcal F} f(y)=\sqrt{\frac{2}{\pi}} \int_0^{\infty} f(x) \sin(x y) \d x
$$
and then extended to $L^2((0,\infty), \d x)$. 
Note that the Fourier sine transform is an involution on $L^2((0,\infty), \d x)$, that is, ${\mathcal F}^2=I$. 
With this setup, the transition semigroup of the process $X$ can be written as in \eqref{eqn:semigroup_X} when restricted to $\bfS = (0,1)$.

Next, we take $\calY=(0,\infty)$ and a process $Y=\{Y_t\}_{t\ge 0}$ to be the radial part of a three-dimensional Cauchy process, that is 
\begin{equation}\label{eq:Cauchy}
Y_t=\sqrt{Z_{1,t}^2+Z_{2,t}^2+Z_{3,t}^2},
\end{equation}
where $\{Z_{i,t}\}_{t\ge 0}, i=1,2,3$ are independent Cauchy processes. The process $Y$ is  a time-homogeneous Markov process with the transition probability kernel 
\begin{align}
q_t(y_1,y_2)&=\frac{y_2}{y_1} \frac{2}{\pi} \int_0^{\infty} e^{-t  x} \sin(y_1 x) \sin( y_2 x) \d x\nonumber\\
\label{eq:q}
&=\frac{y_2}{y_1} \frac{1}{\pi }
\Big[ \frac{t}{t^2+(y_1-y_2)^2}-\frac{t}{t^2+(y_1+y_2)^2} \Big],
\quad t>0, y_1,y_2>0,
\end{align}
see  \cite[Corollary 1]{kyprianou21doob}.
The process $Y$ can also be identified as the square root of the so-called $1/2$-stable Biane process, which is a homogeneous Markov process that appeared recently in limit theorems concerning Askey--Wilson processes \cite{bryc16local,Bryc_Wang_2018,bryc19limit}. This process also has connections to free probability \cite{biane98processes}.  Taking $j_t(y)=y$ and $\psi(x)=x$ we see that the transition semigroup of the process $Y$ is described as in \eqref{eqn:semigroup_Y}.

We let $\mathbb B^{\rm ex}$ denote a normalized Brownian excursion.

\begin{proposition}\label{prop_B_ex}
Assume that 
$$
0<s_0<s_1< \dots < s_{n}<1, \;\;\;
0=t_0<t_1<\dots<t_{n}. \;\;\; 
$$
Let $f : (0,\infty) \to \c$  be $g : (0,\infty) \to \c$ are functions such that $h_{s_n} f\in L^2((0,\infty), \d x)$ 
and 
\[
y g(y)= [\calF (h_{s_n}f)](y).  
\]
Then 
\begin{equation}\label{eq:duality_B_ex}
\e\Big[ e^{- \sum\limits_{k=0}^{n-1} \Delta t_k{\mathbb B}^{\textnormal{ex}}_{s_k}  }  f({\mathbb B}^{\textnormal{ex}}_{s_n})\Big]=
2\int_0^{\infty}  \e\Big[ e^{-\frac{1}{2}\sum\limits_{k=0}^{n-1} \Delta s_kY_{t_{k+1}}^2 
} g(Y_{t_n})| Y_{0}=y\Big] y^2 e^{-\frac{1}{2} s_0y^2} \d y.
\end{equation}
\end{proposition}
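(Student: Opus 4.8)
The plan is to deduce \eqref{eq:duality_B_ex} from Corollary \ref{corollary1} by choosing the initial weights $\ell^X_{\rm init}$ and $\ell^Y_{\rm init}$ appropriately. The ambient data of Assumption \ref{assump:1} is the one set up in this subsection: $L^2(\calX,\mu)=L^2(\calY,\nu)=L^2((0,\infty),\d x)$, $\calF$ the Fourier sine transform (so $\calF^{-1}=\calF$), $\phi(y)=y^2/2$, $\psi(x)=x$, $h_s$ as in \eqref{h_s_Brownian_Excursion} and $j_t(y)=y$. With these choices the functions $\mathsf F$ and $\mathsf G$ of Theorem \ref{thm0} become exactly the conditional expectations appearing inside \eqref{eq:duality_B_ex}, since $\psi(X_{s_k})=X_{s_k}$ and $\phi(Y_{t_{k+1}})=Y_{t_{k+1}}^2/2$; moreover the hypothesis $y\,g(y)=[\calF(h_{s_n}f)](y)$ is precisely the condition \eqref{condition_l_r}, because $j_{t_n}(y)=y$. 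Thus Theorem \ref{thm0} applies once $h_{s_n}f\in L^2((0,\infty),\d x)$, which is assumed.

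First I would take $\ell^X_{\rm init}$ to be the one-dimensional marginal density of the excursion at time $s_0$. Reading it off the transition kernel at $s_1=0$ gives $\ell^X_{\rm init}(x)=\sqrt{8\pi}\,h_{s_0}(x)h_{1-s_0}(x)$, and a direct Gaussian computation confirms this integrates to $1$, so it is genuinely the law of $\mathbb B^{\rm ex}_{s_0}$. By the Markov property of the excursion together with the tower property, integrating $\mathsf F(x)$ against $\ell^X_{\rm init}(x)\,\d x$ reconstructs the unconditional expectation on the left of \eqref{eq:duality_B_ex}, which is exactly the left-hand side of \eqref{eq:duality_with_entrance_law}. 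The integrability requirement $\ell^X_{\rm init}/h_{s_0}\in L^2((0,\infty),\d x)$ is immediate once one notes the cancellation $\ell^X_{\rm init}/h_{s_0}=\sqrt{8\pi}\,h_{1-s_0}=\tfrac{2x}{s_0^{3/2}}e^{-x^2/(2s_0)}$, a Gaussian-type function.

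The remaining step is to compute the dual weight $\ell^Y_{\rm init}$ from \eqref{eqn:l_r_condition}. Since $t_0=0$ and $j_{t_0}(y)=y$, this reads $\ell^Y_{\rm init}(y)/y=[\calF(\ell^X_{\rm init}/h_{s_0})](y)$. Using the elementary Fourier-sine evaluation $\int_0^\infty x\,e^{-ax^2}\sin(xy)\,\d x=\tfrac{\sqrt\pi}{4a^{3/2}}\,y\,e^{-y^2/(4a)}$ with $a=1/(2s_0)$, the constants collapse and yield $[\calF(\ell^X_{\rm init}/h_{s_0})](y)=2y\,e^{-s_0y^2/2}$, hence $\ell^Y_{\rm init}(y)=2y^2e^{-s_0y^2/2}$. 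Substituting $\ell^Y_{\rm init}$ and $\nu(\d y)=\d y$ into the right-hand side of \eqref{eq:duality_with_entrance_law} reproduces the factor $2\,y^2e^{-s_0y^2/2}$ in \eqref{eq:duality_B_ex}, completing the identification.

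I expect the main obstacle to be not the algebra but the bookkeeping around the $L^2$-extensions: because $\mathsf F$ is defined through the extension procedure of Remark \ref{rem:f} rather than as a naive pointwise expectation, one must check that pairing $\mathsf F$ with the probability density $\ell^X_{\rm init}$ really recovers the honest expectation $\e[e^{-\sum_k\Delta t_k\mathbb B^{\rm ex}_{s_k}}f(\mathbb B^{\rm ex}_{s_n})]$, and that the excursion semigroup genuinely has the spectral form \eqref{eqn:semigroup_X} on $\bfS=(0,1)$ with the stated $\phi$, $\calF$ and Doob $h$-transform $h_s$. Once these identifications are secured, Corollary \ref{corollary1} delivers \eqref{eq:duality_B_ex} directly.
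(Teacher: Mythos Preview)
Your proposal is correct and follows essentially the same route as the paper: you apply Corollary \ref{corollary1} with $\ell^X_{\rm init}(x)=\sqrt{8\pi}\,h_{s_0}(x)h_{1-s_0}(x)$ and $\ell^Y_{\rm init}(y)=2y^2e^{-s_0y^2/2}$, verify $\ell^X_{\rm init}/h_{s_0}\in L^2$, and compute the Fourier sine transform to confirm \eqref{eqn:l_r_condition}. The paper's proof is identical in structure but more terse, packaging the sine-transform computation as $[\calF(xe^{-tx^2/2})](y)=\sqrt{2\pi}\,h_t(y)$ rather than working through the explicit Gaussian integral.
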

\begin{proof}
To apply Corollary \ref{corollary1}, we take $h_s$ as in \eqref{h_s_Brownian_Excursion}  and $j_t(y) = y$.  We set 
$$
{\ell}^X_{\rm init}(x)=\sqrt{8\pi} h_{s_0}(x) h_{1-s_0}(x), \;\;\;
{\ell}^Y_{\rm init}(y)=2 y^2 e^{-\frac{1}{2} s_0y^2}.
$$
We check readily that 
$\ell_{\rm init}^X/h_{s_0}\in L^2((0,\infty),\d x)$ and, using $\int_0^\infty xe^{-tx^2/2}\sin(xy)\d x  = \pi h_t(y)$, or equivalently $[\calF(x e^{-tx^2/2})](y) = \sqrt{2\pi } h_t(y)$, that 
\begin{equation}\label{eq:l_t_Fs_transform}
\frac{\ell_{\rm init}^Y(y)}{j_{t_0}(y)}=2y e^{-\frac{1}{2} s_0y^2} = \sqrt{8\pi}[\calF(h_{1-s_0})](y) = \Big[\calF\Big(\frac{\ell_{\rm init}^X}{h_{s_0}}\Big)\Big](y).
\end{equation}
Now \eqref{eq:duality_B_ex} follows from  Corollary \ref{corollary1}.
\end{proof}

If we set $f \equiv 1$ in Proposition \ref{prop_B_ex}, then 
$$
g(y) = \frac{1}{y} \calF h_{s_n}(y)=\frac{1}{\sqrt{2\pi}} e^{-\frac{1}{2} (1-s_n) y^2},
$$
and, \eqref{eq:duality_B_ex} becomes
\begin{align*}
\e\Big[ e^{- \sum\limits_{k=0}^{n-1} \Delta t_k{\mathbb B}^{\textnormal{ex}}_{s_k}  }  \Big]&=
{\sqrt {\frac2\pi}}\int_0^{\infty}  \e\Big[ e^{-\frac{1}{2}\sum\limits_{k=0}^{n-1} \Delta s_kY_{t_{k+1}}^2 
}e^{-\frac12(1-s_n)Y_{t_n}^2}| Y_{0}=y\Big] y^2 e^{-\frac{1}{2} s_0y^2} \d y.\end{align*}
After regrouping exponential functions and relabelling (note that $s_n$ is not used in the re-labelling)
\[
\tilde s_k = s_{k-1}, k=1,\dots,n, \tilde s_0 = 0, \tilde s_{n+1} = 1,
\]
 we end up with \eqref{eq:SPL},
which we recall here for convenience:
\[
\e\Big[ e^{- \sum\limits_{k=1}^{n} (t_k-t_{k-1}){\mathbb B}^{\textnormal{ex}}_{\tilde s_k}}  \Big]=
\sqrt{\frac2\pi}\int_0^{\infty}  \e\Big[ e^{-\frac{1}{2}\sum\limits_{k=0}^{n} (\tilde s_{k+1}-\tilde s_k)Y_{t_{k}}^2 
}| Y_{0}=y\Big] y^2  \d y.
\]

Next, we will discuss the example of Brownian meander. The setup and the computations here are very similar to that of Brownian excursion, as both processes are related to the kernel of killed Brownian motion $\tilde p_t$, but up to different $h$-transforms.   

This time, we take a Markov process  $X=\{X_s\}_{0\le s \le 1}$ on $\calX=[0,\infty)$ with $X_0 = 0$ and the transition probability density function
\[
p_{s_1,s_2}(x_1,x_2)=\begin{cases}
\sqrt{8\pi}h_{1-s_2} (x_2)\tilde h_{s_2}(x_2),& \mbox{ if } \; 0=s_1<s_2\le 1, \;x_1 = 0, \;x_2>0,\\\\
\displaystyle \frac{\tilde h_{s_2}(x_2)}{\tilde h_{s_1}(x_1)}\tilde p_{s_2-s_1}(x_1,x_2), & \mbox{ if }\; 0<s_1<s_2\le 1, \; x_1,x_2>0,
\end{cases}
\]
with
$$
\tilde h_s(x)=\frac{1}{\sqrt{2\pi (1-s)}} \int_0^x  e^{-\frac{u^2}{2(1-s)}} \d u = \frac12\int_0^\infty \tilde p_{1-s}(x,y)\d y, \;\;\; s\in(0,1),
$$
and by continuity $\tilde h_1(x) = 1/2$. 
The process $X$ is the Brownian meander, denoted by ${\mathbb B}^{\textnormal{me}}$ below. 
We take the process $Y=\{Y_s\}_{s\ge 0}$ the same as introduced earlier in \eqref{eq:Cauchy}.

\begin{proposition}\label{prop_B_me}
Assume that 
$$
0<s_0<s_1<\dots<s_{n}<1, \;\;\; 0=t_0<t_1< \dots < t_{n}. 
$$
Let $f : (0,\infty) \to \c$  and $g : (0,\infty) \to \c$ be functions such that $\tilde h_{s_n} f \in L^2((0,\infty), \d x)$ 
and 
\[
y g(y)= [\calF(\tilde h_{s_n}f)](y).  
\]
Then 
\begin{equation}\label{eq:duality_B_me}
\e\Big[ e^{- \sum\limits_{k=0}^{n-1} \Delta t_k{\mathbb B}^{\textnormal{me}}_{s_k} }  f({\mathbb B}^{\textnormal{me}}_{s_n})\Big]=
2\int_0^{\infty}  \e\Big[ e^{-\frac{1}{2}\sum\limits_{k=0}^{n-1} \Delta s_kY_{t_{k+1}}^2 
} g(Y_{t_n})| Y_{0}=y\Big] y^2 e^{-\frac{1}{2} s_0y^2 } \d y.
\end{equation}
\end{proposition}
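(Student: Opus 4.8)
The plan is to apply Corollary \ref{corollary1}, following the proof of Proposition \ref{prop_B_ex} almost verbatim; the only change is that the excursion $h$-transform $h_s$ of \eqref{h_s_Brownian_Excursion} is replaced throughout by the meander $h$-transform $\tilde h_s$. First I would place the meander into the framework of Assumption \ref{assump:1}. Taking $\mu=\d x$ on $\calX=(0,\infty)$, $\phi(y)=y^2/2$, and $\calF$ the Fourier sine transform (which is an involution on $L^2((0,\infty),\d x)$), the spectral representation $\tilde p_t(x_1,x_2)=\frac{2}{\pi}\int_0^\infty e^{-ty^2/2}\sin(x_1 y)\sin(x_2 y)\,\d y$ shows that the killed Brownian kernel satisfies $\tilde p_{s_2-s_1}=\calF^{-1}e^{(s_1-s_2)\phi}\calF$. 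Since the meander transition density equals $\tilde h_{s_2}(x_2)/\tilde h_{s_1}(x_1)$ times $\tilde p_{s_2-s_1}(x_1,x_2)$ for $0<s_1<s_2$, its semigroup is exactly of the form \eqref{eqn:semigroup_X} with $h_s=\tilde h_s$. The process $Y$ is unchanged, so the choices $j_t(y)=y$ and $\psi(x)=x$ yield \eqref{eqn:semigroup_Y} as already recorded above.

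Next I would choose the initial data. Because the meander starts at $X_0=0$, conditioning on ${\mathbb B}^{\textnormal{me}}_{s_0}$ identifies the density $p_{0,s_0}(0,\cdot)$ of ${\mathbb B}^{\textnormal{me}}_{s_0}$ as the correct entrance weight, so I set
\[
\ell^X_{\rm init}(x)=\sqrt{8\pi}\, h_{1-s_0}(x)\tilde h_{s_0}(x), \qquad \ell^Y_{\rm init}(y)=2 y^2 e^{-s_0 y^2/2}.
\]
The decisive observation, and the reason the meander case costs no extra work, is that dividing by the meander $h$-transform cancels the factor $\tilde h_{s_0}$:
\[
\frac{\ell^X_{\rm init}}{\tilde h_{s_0}}=\sqrt{8\pi}\, h_{1-s_0},
\]
which is precisely the function that appeared in the excursion case after dividing by $h_{s_0}$. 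In particular $\ell^X_{\rm init}/\tilde h_{s_0}=\sqrt{8\pi}\,h_{1-s_0}\in L^2((0,\infty),\d x)$, since $h_{1-s_0}$ is a Gaussian multiplied by $x$.

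Finally I would verify the compatibility condition \eqref{eqn:l_r_condition}. Since $\ell^X_{\rm init}/\tilde h_{s_0}$ agrees with the excursion expression, the Fourier sine transform computation is identical to \eqref{eq:l_t_Fs_transform}: using $[\calF(x e^{-tx^2/2})](y)=\sqrt{2\pi}\,h_t(y)$ one obtains $\calF(\sqrt{8\pi}\,h_{1-s_0})(y)=2y e^{-s_0 y^2/2}=\ell^Y_{\rm init}(y)/j_{t_0}(y)$. With both hypotheses of Corollary \ref{corollary1} checked and with $f,g$ linked by $y g(y)=\calF(\tilde h_{s_n}f)$ as in \eqref{condition_l_r}, the identity \eqref{eq:duality_B_me} follows at once. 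I do not expect a genuine obstacle here; the only points needing care are confirming that the meander semigroup really is the $\tilde h$-transform of killed Brownian motion in the precise form \eqref{eqn:semigroup_X}, and that $p_{0,s_0}(0,\cdot)$ is the legitimate entrance density. Everything else transfers from the excursion argument unchanged.
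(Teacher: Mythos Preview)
Your proposal is correct and follows essentially the same approach as the paper: you apply Corollary \ref{corollary1} with $h_s=\tilde h_s$, $j_t(y)=y$, $\ell^X_{\rm init}(x)=\sqrt{8\pi}\,h_{1-s_0}(x)\tilde h_{s_0}(x)$, and $\ell^Y_{\rm init}(y)=2y^2e^{-s_0y^2/2}$, and observe that since $\ell^X_{\rm init}/\tilde h_{s_0}=\sqrt{8\pi}\,h_{1-s_0}$ coincides with the excursion case, the verification \eqref{eq:l_t_Fs_transform} carries over unchanged. The paper's proof says exactly this, only more tersely.
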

\begin{proof}
This time we take $h_s = \tilde h_s, j_t(y) = y$, $\ell_{\rm init}^X(x) = \sqrt{8\pi}h_{1-s_0}(x)\tilde h_{s_0}(x)$ and $\ell_{\rm init}^Y(y) = 2y^2 e^{-s_0y^2/2}$. The identity \eqref{eq:l_t_Fs_transform} remains to hold and \eqref{eq:duality_B_me} follows from Corollary \ref{corollary1}.
\end{proof}
Now we derive the following dual Laplace transform for Brownian meander.
Consider $0=\tilde s_0 <\tilde s_1<\cdots<\tilde s_{n+1} = 1$, $c_1,\dots,c_n>0, c_{n+1}\ge 0$, and $t_0 = 0, t_k = c_1+\cdots+c_k, k=1,\dots,n$. Then, 
\begin{equation}\label{eq:duality_Bme}
\e\Big[e^{-\sum\limits_{k=1}^{n+1}c_k\mathbb B^{\textnormal{me}}_{\tilde s_k}}\Big] = \sqrt{\frac2\pi}\int_0^\infty \e\Big[e^{-\frac12\sum\limits_{k=0}^{n}\Delta \tilde s_kY^2_{t_k}}\frac1{c_{n+1}^2+Y^2_{t_{n}}} | Y_{0} = y\Big]y^2\d y.
\end{equation}
The above identity has been obtained in \cite[Proposition 4.3 and (4.5)]{bryc23asymmetric} (therein expressed in terms of $\zeta_t = Y_t^2$). 
Note that $\mathbb B_1^{\textnormal {me}}$ is involved on the left-hand side, and  the time indices of $Y$ process involved on the right-hand side, $t_0,\dots,t_n$, do not depend on $c_{n+1}$.

We derive \eqref{eq:duality_Bme} from Proposition \ref{prop_B_me}. Note that $s_n<1$ is required therein, and we need a small continuity argument to go around it. Take $f(x) = e^{-\Delta t_nx}$, which is a continuous function. Therefore, by continuity now \eqref{eq:duality_B_me} holds with $s_n = 1$  and the relation
$yg(y) = [\calF (\tilde h_{s_n}f)](y)$ replaced by $yg(y) = [\calF(f/2)](y)$.
We apply this modified identity. By Laplace transform of the sine function, we have
\[
g(y) = \frac1{y}\calF f(y) = \frac1{2y}\sqrt{\frac 2\pi}\int_0^\infty\sin(xy) e^{-\Delta t_n x}\d x = \frac1{\sqrt{2\pi}}\frac y{(\Delta t_n)^2+y^2},
\]
In this way, we obtain \eqref{eq:duality_Bme}.
\begin{remark}
When the duality representation concerning Brownian meander first appeared in \cite[Theorem 1.2]{Bryc_Wang_2018},  the formula therein, with $c_{n+1} =0$ (recall $\tilde s_n<1$), is in fact one more step from \eqref{eq:duality_Bme} via `reversing' the Markov process $Y$ by conditioning on the value of $Y_{t_n}$ instead. Note that $Y$ is reversible: $ x^2 q_t(x,y) =  y^2 q_t(y,x)$ (recall $q_t$ in \eqref{eq:q}). So now \eqref{eq:duality_Bme} becomes
\begin{equation}\label{eq:SPL1}
\e\Big[e^{-\sum\limits_{k=1}^{n}c_k\mathbb B^{\textnormal{me}}_{\tilde s_k}}\Big] = \sqrt{\frac2\pi}\int_0^\infty \e\Big[e^{-\frac12\sum\limits_{k=0}^{n}\Delta\tilde s_kY^2_{t_k}}| Y_{t_{n}} = y\Big]\d y.
\end{equation}
After relabeling, we recognize the above as the one in \cite[Theorem 1.2]{Bryc_Wang_2018} (which was stated in terms of process $\zeta_t = Y_t^2$). For comparison, there it was also established that
\[
\e\Big[e^{-\sum\limits_{k=1}^{n}c_k\mathbb B^{\textnormal{ex}}_{\tilde s_k}}\Big] = \sqrt{\frac2\pi}\int_0^\infty \e\Big[e^{-\frac12\sum\limits_{k=0}^{n}\Delta\tilde s_kY^2_{t_k}}| Y_{t_{n}} = y\Big]y^2\d y.
\]
The proof establishing \eqref{eq:SPL1} in  
\cite{Bryc_Wang_2018} was different from the one presented here: it was by working with a Brownian excursion with a randomized end point.
\end{remark}



\subsection{CIR diffusion and a birth-and-death process}\label{subsection_CIR}


Let $X=\{X_s\}_{s\ge 0}$ be the square-root diffusion process, also known in the finance literature as the Cox--Ingersoll--Ross process, see \cite{LINETSKY2007223}. This process satisfies the SDE
$$
\d X_s=(\alpha+1-X_s) \d s+ \sqrt{2 X_s} \d W_s,
$$
where $\{W_s\}_{s\ge 0}$ is a standard Brownian motion.
We assume that $\alpha \ge 0$, so that zero is an inaccessible (entrance) boundary for $X$. The density of the stationary measure is known to be 
\[
\mu_\alpha(x)=\frac{x^{\alpha}}{\Gamma(\alpha+1)} e^{-x}, \;\;\; x>0. 
\]
Let $L_k^{(\alpha)}(x)$ be Laguerre polynomials, see \cite[Section 8.97]{gradshteyn07table}. When $\alpha=0$ we write simply $L_k^{(0)}(x)=L_k(x)$. These polynomials satisfy the following properties:
\begin{itemize}
  \item[(i)]  the orthogonality relation
\[
\int_0^{\infty} L_{k_1}^{(\alpha)}(x) L_{k_2}^{(\alpha)}(x) \mu_\alpha(x) \d x=\frac{\delta_{k_1,k_2}}{\pi^{(\alpha)}_{k_2}},  \quad k_1,k_2\ge 0,
\]
where 
$$
\pi^{(\alpha)}_k:=\frac{k!}{(1+\alpha)_k}, \;\;\; k\ge 0, 
$$
\item[(ii)]  the three-term recurrence relation 
$$
(k+1) L_{k+1}^{(\alpha)}(x)-(2k+\alpha+1) L_k^{(\alpha)}(x) + (k+\alpha) L_{k-1}^{(\alpha)}(x) = - x L_k^{(\alpha)}(x),  \quad k\ge 0,
$$
with $L_0^{(\alpha)}(x) = 1, L_{-1}^{(\alpha)}(x) = 0$. 
\item[(iii)]  and $L_k^{(\alpha)}(x)$ is a solution to the ODE
$$
(\alpha+1-x) f'(x)+x f''(x)+k f(x)=0,
$$
for each $k\ge 0$. 
\end{itemize}
The transition density of the process $X$ is known to be \cite{LINETSKY2007223}
\begin{align}\label{Hille_Hardy}
p_s(x_1,x_2)&=c_s \big( e^{s} x_2 /x_1\big)^{\alpha/2}
\exp\big(-c_s (x_1 e^{-s}+x_2) \big) I_{\alpha}\big(2 c_s \sqrt{x_1 x_2 e^{-s}} \big)\\ \nonumber
&=
\mu_\alpha(x_2)\sum\limits_{k\ge 0} e^{-k s} L_k^{(\alpha)}(x_1)L_k^{(\alpha)}(x_2)  \pi^{(\alpha)}_k, \;\;\; t>0,\; x_1>0, \; x_2>0.  
\end{align}
Here we denoted $c_s:=1/(1-e^{-s})$ and the second equality in the above formula follows from Hille--Hardy formula, see equation (8.976.1) in \cite{gradshteyn07table}. 
The infinite series in \eqref{Hille_Hardy} converges pointwise. This can be verified using the asymptotic expansion of Laguerre polynomials given in the formula (8.978.3) in 
\cite{gradshteyn07table}.

Consider now a birth and death process $Y=\{Y_t\}_{t\ge 0}$ that is supported on $\mathbb Z^+:=\{0,1,2,\dots\}$ and is specified via the transition probabilities $
q_t(k_1,k_2)=\p(Y_t=k_2 | Y_0=k_1)$ satisfying, for all $k_1, k_2 \ge 0$ as $t\to 0^+$
\begin{align*}
q_t(k_1,k_2)=
\begin{cases}
(k_1+\alpha)t+o(t), \;\;\; &
{\textnormal{ if }} k_2=k_1-1,\\
1-(2k_1+\alpha+1)t+o(t), \;\;\; &
{\textnormal{ if }} k_2=k_1,\\
(k_1+1)t+o(t), \;\;\; &
{\textnormal{ if }} k_2=k_1+1, \\
o(t), \;\;\; &
{\textnormal{ if }} |k_2-k_1|>1.\\
\end{cases}
\end{align*}
According to Karlin and McGregor \cite{karlin1957}, the transition probabilities for process $Y$ can be expressed as follows:  
$$
q_t(k_1,k_2)=\pi^{(\alpha)}_{k_2} \int_0^{\infty} e^{-t x}  L_{k_1}^{(\alpha)}(x)  L_{k_2}^{(\alpha)}(x) \mu_\alpha(x) \d x, \;\;\; k_1\ge 0, k_2\ge 0. 
$$

Let us now reinterpret the above discussion in the Hilbert space framework of Section \ref{section_general_case}. We consider spaces 
$L^2(\calX,\mu) = L^2((0,\infty), \mu_\alpha(x) \d x)$ and $L^2(\calY, \nu_\alpha) = L^2({\mathbb Z}^+, \nu_\alpha)$, 
where $\nu_\alpha$ is the discrete measure supported on ${\mathbb Z}^+$ having weights 
\[
\nu_\alpha(\{k\})=\pi^{(\alpha)}_k, \quad k\in\mathbb Z^+.
\] Define now a transformation ${\mathcal F}_\alpha$, which takes a function $f \in L^2((0,\infty), \mu_\alpha(x) \d x)$ and returns a sequence $\{{\mathcal F}_\alpha f(k)\}_{k\ge 0}$ (viewed as an element in $L^2(\mathbb Z^+,\nu)$) of coefficients of expansion of $f$ in Laguerre polynomials: 
$$
{\mathcal F}_\alpha f(k)= \int_0^{\infty} f(x) L_k^{(\alpha)}(x) \mu_\alpha(x) \d x, \quad k\in\mathbb Z^+.
$$
Orthogonality and completeness of Laguerre polynomials in $L^2((0,\infty), \mu_\alpha(x) \d x)$ imply that ${\mathcal F}_\alpha$ is a unitary operator from 
$L^2((0,\infty), \mu_\alpha(x) \d x)$ onto $L^2({\mathbb Z}^+, \nu_\alpha)$, and the inverse transformation is given by 
$$
{\mathcal F}^{-1}_\alpha g(x)=\sum\limits_{k\ge 0} g(k) L_k^{(\alpha)}(x) \pi^{(\alpha)}_k, \quad\{g(k)\}_{k\in \mathbb Z^+} \in L^2(\mathbb Z^+,\nu_\alpha).
 $$
With these definitions in place, we can now describe the semigroups of process $X$ and $Y$ as follows: 
\begin{align*}
   {\mathcal P}_{s_1,s_2} f(x)&= {\mathbb E}[f(X_{s_2}) \vert X_{s_1}=x]=[{\mathcal F}_\alpha^{-1} e^{(s_1-s_2) y} {\mathcal F}_\alpha f](x), \;\;\; s_1<s_2,\\
   {\mathcal Q}_{t_1,t_2}g(y)&={\mathbb E}[g(Y_{t_2}) \vert Y_{t_1}=y]=[{\mathcal F}_\alpha e^{(t_1-t_2) x} {\mathcal F}_\alpha^{-1} g](y),\;\;\; t_1<t_2,
\end{align*}
where $f\in L^2((0,\infty), \mu_\alpha(x) \d x)$ and $g\in L^2({\mathbb Z}^+, \nu_\alpha)$. As before, the terms $e^{(t_1-t_2) x}$ and $e^{(s_1-s_2) y}$ stand for  multiplication operators: the first multiplies a function $f(x)$ by $e^{(t_1-t_2) x}$ and the second sends a sequence  $g=\{g(k)\}_{k \ge 0}$ into a sequence
$\{e^{(s_1-s_2)k} g(k)\}_{k \ge 0}$. 

Now we can apply Theorem \ref{thm0} and obtain the following result. 

\begin{proposition}
Assume that 
$$
0=s_0<s_1<\dots<s_{n}, \;\;\; 0=t_0<t_1< \dots < t_{n}. 
$$
Assume that $g=\{g(k)\}_{k\ge 0}$ is a sequence in $L^2({\mathbb Z}^+, \nu_\alpha)$ and let
$$
f(x)={\mathcal F}^{-1}_\alpha g(x)=\sum\limits_{k\ge 0} g(k) L_k^{(\alpha)}(x) \pi^{(\alpha)}_k.
$$
Denote
\[
{\mathsf F}(x):=\e\Big[ e^{-\sum\limits_{j=0}^{n-1}  \Delta t_jX_{s_j}
}
f(X_{s_{n}}) | X_{0}=x \Big], \;\;\; x>0,
\]
and 
\[
{\mathsf G}(k):=\e\Big[ e^{-\sum\limits_{j=0}^{n-1}  \Delta s_jY_{t_{j+1}}}g(Y_{t_{n}}) | Y_{0}=k \Big], 
\;\;\; k\in\mathbb Z^+. 
\]
Then ${\mathsf F} \in L^2((0,\infty), \mu_\alpha(x)\d x)$ and 
${\mathsf G}={\mathcal F}_\alpha {\mathsf F}$.  
\end{proposition}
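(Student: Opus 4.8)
The plan is to obtain this Proposition as a direct specialization of Theorem~\ref{thm0} to the pair $(X,Y)$ at hand, so that the proof reduces to (a) verifying that Assumption~\ref{assump:1} holds with the correct dictionary and (b) matching the notation of the two statements. First I would record the dictionary: the unitary operator is $\calF={\mathcal F}_\alpha : L^2((0,\infty),\mu_\alpha(x)\d x)\to L^2(\mathbb{Z}^+,\nu_\alpha)$, the Laguerre transform; the weights are trivial, $h_s\equiv j_t\equiv 1$; and the spectral symbols are $\psi(x)=x$ on $\calX=(0,\infty)$ and $\phi(k)=k$ on $\calY=\mathbb{Z}^+$. Both $\psi$ and $\phi$ take values in $\c^+$ since $x>0$ and $k\ge 0$. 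With these choices the two displays \eqref{eqn:semigroup_X} and \eqref{eqn:semigroup_Y} become exactly the diagonalizations ${\mathcal P}_{s_1,s_2}f={\mathcal F}_\alpha^{-1}e^{(s_1-s_2)y}{\mathcal F}_\alpha f$ and ${\mathcal Q}_{t_1,t_2}g={\mathcal F}_\alpha e^{(t_1-t_2)x}{\mathcal F}_\alpha^{-1}g$ already established in the discussion preceding the Proposition (the former via the Hille--Hardy expansion \eqref{Hille_Hardy}, the latter via the Karlin--McGregor representation). Hence Assumption~\ref{assump:1} is satisfied.

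Next I would check the hypotheses of Theorem~\ref{thm0}. Since $g\in L^2(\mathbb{Z}^+,\nu_\alpha)$ and ${\mathcal F}_\alpha$ is unitary, the function $f={\mathcal F}_\alpha^{-1}g$ lies in $L^2((0,\infty),\mu_\alpha(x)\d x)$; as $h_{s_n}=1$ this is precisely $h_{s_n}f\in L^2(\calX,\mu)$. Moreover, applying ${\mathcal F}_\alpha$ to $f={\mathcal F}_\alpha^{-1}g$ yields $g={\mathcal F}_\alpha f$, which, using $j_{t_n}=1$, is exactly the compatibility condition \eqref{condition_l_r}. Finally, with $\psi(x)=x$, $\phi(k)=k$ and $s_0=t_0=0$, the functions ${\mathsf F}$ and ${\mathsf G}$ displayed in the Proposition coincide term by term with those in Theorem~\ref{thm0}; here the expectations are read in the extended $L^2$ sense of Remark~\ref{rem:f}, which is needed because $f$ is not assumed bounded.

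Theorem~\ref{thm0} then delivers $h_{s_0}{\mathsf F}\in L^2(\calX,\mu)$ and $j_{t_0}{\mathsf G}={\mathcal F}(h_{s_0}{\mathsf F})$, and since $h_{s_0}=j_{t_0}=1$ these collapse to ${\mathsf F}\in L^2((0,\infty),\mu_\alpha(x)\d x)$ and ${\mathsf G}={\mathcal F}_\alpha{\mathsf F}$, which is the assertion. I do not anticipate a genuine obstacle here: the real analytic content, namely that the two semigroups are \emph{simultaneously} diagonalized by the same unitary ${\mathcal F}_\alpha$, has already been carried out in constructing ${\mathcal F}_\alpha$ together with the spectral formula \eqref{Hille_Hardy} and the Karlin--McGregor formula. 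The only point that merits care is the bookkeeping that the \emph{discrete} spectral index of $X$ plays the role of $\phi$ while the \emph{continuous} variable governing $Y$ plays the role of $\psi$, and that the possibly unbounded $f$ enters through the $L^2$-extensions of the operators involved rather than through pointwise expectations.
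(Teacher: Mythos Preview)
Your proposal is correct and follows exactly the paper's own approach: the paper's proof is the one-line ``apply Theorem~\ref{thm0} with $h_s=1$ and $j_t=1$,'' and you have simply spelled out the dictionary and hypothesis checks that make this application legitimate.
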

\begin{proof}
The proof follows by applying Theorem \ref{thm0}, we take $h_s = 1$ and $j_t = 1$.
\end{proof}

For example, if we take $g(k)=\delta_{k,m}$, then $f(x)=L_m^{(\alpha)}(x) \pi^{(\alpha)}_m$, $\mathsf F = \calF^{-1}\mathsf G$ reads as
\[
    \pi_m^{(\alpha)} \e\Big[ e^{-\sum\limits_{j=0}^{n-1}  \Delta t_jX_{s_j}
}
L_m^{(\alpha)}(X_{s_{n}}) | X_{0}=x \Big]= 
\sum\limits_{k\ge 0}\e\Big[ e^{-\sum\limits_{j=0}^{n-1}  \Delta s_jY_{t_{j+1}}} {\mathbf 1}_{\{Y_{t_{n}}=m\}} | Y_{0}=k \Big] L_{k}(x) \pi_{k}.
\]

Let us now consider another example. We take a CIR process $X$ and a birth and death process $Y$ with $\alpha=0$, so that $\pi_k \equiv \pi_k^{(0)}=1$ for all $k\ge 0$ and $\mu_0(x)=\exp(-x)$. We set $g(k)=\delta_{k,0}$ so that $f(x)=
L_0(x) \pi_0=1$ for all $x>0$. Using the series expansion (formula (8.975.1) in \cite{gradshteyn07table})
$$
 \exp\Big(-x \frac{1-\lambda}{\lambda}\Big) 
= \lambda \sum\limits_{k\ge 0} L_k(x) (1-\lambda)^k = \calF_0^{-1}g(x), \;\;\; 0<\lambda<1,
$$
with $g(k) = \lambda (1-\lambda)^k$,
and by Corollary \ref{corollary1} we obtain the following result: 
\begin{align}\label{eqn:CIR_exponential}
\int_0^{\infty} \e\Big[ e^{-\sum\limits_{j=0}^{n-1}  \Delta t_jX_{s_j}
} | X_{0}=x \Big]  e^{-\frac{x}{\lambda}} \d x 
= \sum\limits_{k \ge 0} \e\Big[ e^{-\sum\limits_{j=0}^{n-1}  \Delta s_jY_{t_{j+1}}} {\mathbf 1}_{\{Y_{t_{n}}=0\}} | Y_{0}=k \Big]  \lambda (1-\lambda)^{k}.
\end{align}
We recognize the left-hand side (up to a multiple of $1/\lambda$) as a joint Laplace transform of  $(X_{s_0},\dots,X_{s_{n-1}})$, where process $X$ starts at the $X_0 \sim {\textnormal{Exp}}(1/\lambda)$ (an exponential distribution with parameter $1/\lambda$), and the right-hand side of \eqref{eqn:CIR_exponential} gives us the joint Laplace transform of $(Y_{t_1},\dots,Y_{t_n})$ on the event $Y_{t_n}=0$, where the birth-and-death process $Y$ starts at $Y_0 \sim {\textnormal{Geo}}(\lambda)$ (a geometric distribution with parameter $\lambda$). 


\section{Brownian motion killed at an exponential rate and continuous dual Hahn process}\label{section_kBM_and_dual_Hahn}

In this section we consider an example that first appeared in 
\cite{Bryc_et_al_2023}. This example does not fit into the framework presented in Section \ref{section_general_case}, as we can not work with Hilbert spaces. Instead, we have to work with certain Banach spaces, which requires relaxing the conditions in Assumption \ref{assump:1}. Thus, our first goal is to present a more general setup that will allow us to describe the duality between  a Brownian motion killed at an exponential rate and the so-called continuous dual Hahn process, recently introduced in \cite{corwin24stationary} (see also \cite{BRYC2022185}). 
\subsection{A more general framework}
As before, we start with the two Markov processes $X=\{X_s\}_{s\in \bfS}$  and 
 $Y=\{Y_t\}_{t \in {\bfT}}$ taking values in $\calX \subseteq \r$ and $\calY \subseteq \r$ respectively. However, in order to describe their Markov semigroups, we would require the following.
\begin{assumption}\label{assump:2}
There exist 
 
 \begin{itemize}
 \item[(i)]
 two Banach spaces ${\mfA}$ and ${\mfB}$ of measurable complex-valued functions on $\calX$ and $\calY$ respectively;
 \item[(ii)] two continuous functions $\phi: \calY \to \c$ and $\psi: \calX \to \c$, and operators   $\calF$ and $\calG$ such that for every $\delta>0$
 \begin{align}\label{condition_calF}
     &e^{-\delta \phi} \calF f \in \mfB, \;\;\;
     {\textnormal{ for every}} \;\; f\in \mfA,  \\
     \label{condition_calG}
     &e^{-\delta \psi} \calG g \in \mfA, \;\;\;
     {\textnormal{ for every}} \;\; g\in \mfB, 
 \end{align}
 \item[(iii)]  measurable functions $h_s(x): s \in \bfS, x\in \calX\to{(0,\infty)}$ and $j_t(y): t \in \bfT, y\in \calY\to{(0,\infty)}$, 
\end{itemize}
such that Markov semigroups of the process $X$ and $Y$ are given by 
 \begin{align}
 \label{eqn:semigroup_X2}
{\mathcal P}_{s_1,s_2} f(x)&=\left[\frac{1}{h_{s_1}} \calG e^{(s_1-s_2) \phi} \calF (h_{s_2} f)\right](x), \;\;\; 
s_i \in {\bfS}, \; s_1<s_2, \\
\label{eqn:semigroup_Y2}
{\mathcal Q}_{t_1,t_2}g(y)&=\left[\frac{1}{j_{t_1}} \calF e^{(t_1-t_2) \psi} \calG (j_{t_2} g)\right](y), \;\;\; 
t_i \in {\bfT}, \; t_1<t_2. 
\end{align} 
The identities \eqref{eqn:semigroup_X2} and \eqref{eqn:semigroup_Y2} are assumed to hold for functions $f(x)$ and $g(y)$ for which $h_{s_2} f \in \mfA$ and $j_{t_2} g \in \mfB$. 
\end{assumption}

Now we are able to state and prove the following analogue of Theorem \ref{thm0}.
\begin{theorem}\label{thm3}
Let conditions in Assumption \ref{assump:2} be true. Let $s_i \in \bfS$ and $t_i \in \bfT$
be such that 
$$
s_0<s_1<\dots<s_{n}, \;\;\; t_0<t_1< \dots < t_{n}. 
$$
Assume that $f : \calX \to \r$ and $g : \calY \to \r$ are functions such that $h_{s_n} f \in \mfA$ and 
$j_{t_{n}} g={\mathcal F}(h_{s_{n}}f)$.
Denote
\[
{\mathsf F}(x):=\e\Big[ e^{-\sum\limits_{k=0}^{n-1}  \Delta t_k\psi(X_{s_k})}
f(X_{s_{n}}) | X_{s_0}=x \Big], \;\;\; x \in \calX,
\]
and 
\[
{\mathsf G}(y):=\e\Big[ e^{-\sum\limits_{k=0}^{n-1}  \Delta s_k\phi(Y_{t_{k+1}})}g(Y_{t_{n}}) | Y_{t_0}=y \Big], 
\;\;\; y\in \calY. 
\]
Then $h_{s_0} {\mathsf F} \in \mfB$ and the functions ${\mathsf F}$ and ${\mathsf G}$ satisfy
$j_{t_0} {\mathsf G}={\mathcal F} (h_{s_0}{\mathsf F}).$
\end{theorem}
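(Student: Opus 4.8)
The plan is to follow the proof of Theorem \ref{thm0} almost verbatim, making two substitutions: the operator $\calF^{-1}$ is everywhere replaced by $\calG$, and the contraction estimate \eqref{eq:contraction} that guaranteed membership in the Hilbert space is replaced by the mapping conditions \eqref{condition_calF} and \eqref{condition_calG}. First I would expand $h_{s_0}\mathsf F$ using the Markov property exactly as in \eqref{eq:thm0_proof1}: writing $-\Delta t_k = t_k - t_{k+1}$ and conditioning one step at a time gives $h_{s_0}\mathsf F = h_{s_0}e^{(t_0-t_1)\psi}\calP_{s_0,s_1}e^{(t_1-t_2)\psi}\calP_{s_1,s_2}\cdots\calP_{s_{n-1},s_n}f$. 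Substituting the representation \eqref{eqn:semigroup_X2} for each $\calP_{s_k,s_{k+1}} = h_{s_k}^{-1}\calG\, e^{(s_k-s_{k+1})\phi}\calF\, h_{s_{k+1}}$ and using that all the factors $h_{s_k}^{\pm1}$ and $e^{(t_k-t_{k+1})\psi}$ are $x$-multiplication operators (hence commute), the interior $h$-factors cancel in pairs and one is left with the alternating product
\[
h_{s_0}\mathsf F = e^{(t_0-t_1)\psi}\calG\,e^{(s_0-s_1)\phi}\calF\,e^{(t_1-t_2)\psi}\calG\cdots\calG\,e^{(s_{n-1}-s_n)\phi}\calF\,(h_{s_n}f),
\]
the exact analogue of \eqref{eq:thm0_proof1} with $\calF^{-1}\mapsto\calG$.

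The crux --- and the step that differs genuinely from Theorem \ref{thm0} --- is establishing that this expression is a well-defined element of $\mfA$ (the Banach space of functions on $\calX$; note the stated conclusion should read $h_{s_0}\mathsf F\in\mfA$ rather than $\mfB$). Here I cannot appeal to contractions, since under Assumption \ref{assump:2} the functions $\phi,\psi$ may take arbitrary values in $\c$, and neither $\calF,\calG$ nor the multiplication operators $e^{(t_k-t_{k+1})\psi}$ are assumed bounded on $\mfA,\mfB$ individually. Instead I would read the product from the inside out, keeping each multiplication operator paired with the transform immediately to its right: starting from $h_{s_n}f\in\mfA$ and writing $e^{(s_k-s_{k+1})\phi}=e^{-\delta\phi}$ with $\delta = s_{k+1}-s_k>0$, condition \eqref{condition_calF} shows $e^{-\delta\phi}\calF:\mfA\to\mfB$; likewise, writing $e^{(t_k-t_{k+1})\psi}=e^{-\delta'\psi}$ with $\delta'=t_{k+1}-t_k>0$, condition \eqref{condition_calG} shows $e^{-\delta'\psi}\calG:\mfB\to\mfA$. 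Applying these $n$ paired maps alternately carries $h_{s_n}f\in\mfA$ through $\mfB,\mfA,\mfB,\dots$ and lands $h_{s_0}\mathsf F$ back in $\mfA$. The essential point to get right is that the displayed product must be parsed so that every bare multiplication operator is absorbed into the $\calF$ or $\calG$ on its right, since only these combinations are controlled by the hypotheses; this is precisely where the ``for every $\delta>0$'' formulation of \eqref{condition_calF}--\eqref{condition_calG} is needed.

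Finally I would apply $\calF$ to $h_{s_0}\mathsf F$, divide by $j_{t_0}$, and replace $\calF(h_{s_n}f)$ by $j_{t_n}g$ using the boundary hypothesis, exactly as in the passage leading to \eqref{eq:thm0_proof2}. Regrouping via \eqref{eqn:semigroup_Y2}, each block $j_{t_k}^{-1}\calF\, e^{(t_k-t_{k+1})\psi}\calG = \calQ_{t_k,t_{k+1}}\,j_{t_{k+1}}^{-1}$ produces a factor $\calQ_{t_k,t_{k+1}}$; pushing the leftover $j_{t_{k+1}}^{-1}$ to the right past the $y$-multiplication operator $e^{(s_k-s_{k+1})\phi}$ (which commutes with it), and using $j_{t_n}^{-1}e^{(s_{n-1}-s_n)\phi}(j_{t_n}g)=e^{(s_{n-1}-s_n)\phi}g$ at the final step, the whole expression telescopes into $\calQ_{t_0,t_1}e^{(s_0-s_1)\phi}\calQ_{t_1,t_2}\cdots\calQ_{t_{n-1},t_n}e^{(s_{n-1}-s_n)\phi}g$. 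A last application of the Markov property identifies this with $\mathsf G(y)$, yielding $j_{t_0}\mathsf G=\calF(h_{s_0}\mathsf F)$ as claimed. As in the first half, the one thing requiring care is that every intermediate regrouping remain inside $\mfA$ or $\mfB$ so that the transforms may legitimately be applied, which is again guaranteed by \eqref{condition_calF}--\eqref{condition_calG}.
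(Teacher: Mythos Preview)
Your proposal is correct and follows exactly the approach the paper takes: the paper's proof is a two-sentence remark that one reproduces \eqref{eq:thm0_proof1} and \eqref{eq:thm0_proof2} with $\calF^{-1}$ replaced by $\calG$, invoking \eqref{condition_calF}--\eqref{condition_calG} in place of the contraction argument to justify membership in the Banach spaces. You have simply spelled out in full what the paper leaves implicit, and you correctly flag the typo in the statement (the conclusion should read $h_{s_0}\mathsf F\in\mfA$, not $\mfB$).
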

\begin{proof}
The proof is virtually identical to that of Theorem \ref{thm0}. We write down equations 
\eqref{eq:thm0_proof1} and 
\eqref{eq:thm0_proof2}, but with $\calF^{-1}$ replaced by $\calG$. The argument that demonstrates that $h_{s_0} F \in \mfA$ is the same, except that we use \eqref{condition_calF}
and \eqref{condition_calG} so justify that the relevant functions belong to $\mfB$ or $\mfA$. 
\end{proof}

\subsection{An example}
We take $X=\{X_s\}_{s\geq 0}$ as a stochastic process defined by the Markov generator
\begin{equation*}
{\mathcal L}^{X}f(x)=f''(x)-e^{2x} f(x).
\end{equation*}
This process can be identified with a Brownian motion, scaled by $\sqrt{2}$, and killed at rate $e^{2x}$.
It is known  \cite[Example 3.7]{sousa18spectral} that the transition probability density function of the process $X$ is given by 
\begin{equation}\label{eq:Y}
p_s(x_1,x_2)=\int_0^{\infty} 
e^{-sy^2} K_{\i y}(e^{x_1}) K_{iy}(e^{x_2}) \nu(\d y), s>0
\end{equation}
where 
$$
\nu(\d y):=\frac{2}{\pi^2} y\sinh(\pi y)\, \d y,
$$
and $K_{\i y}(z)$ is the modified Bessel function of the second kind, defined as 
\[
K_{\i y}(z)=\int_0^{\infty} e^{-z \cosh(u)} \cos(y u) \d u, \;\;\; z>0.  
\]
The kernel $p$ introduced above in \eqref{eq:Y} is known as the Yakubovich heat kernel,  see \cite{sousa18spectral} and \cite[Section  3.2]{Bryc_et_al_2023}. 
In particular, the Markov semigroup of $X$ can be described in terms of the Kontorovich-Lebedev transform, defined as 
\[
{\mathcal F}_{{\textnormal{KL}}}f (y)=\int_{\r} f(x) K_{\i y}(e^x) \d x, \;\;\; y>0.
\]
It is known (see Section 2.3 in \cite{Yakubovich_1996})
that $\calF_{\textnormal{KL}}$ can be extended to a unitary operator 
\begin{equation}
\calF_{\textnormal{KL}}:L^2(\r,\d x)\to L^2((0,\infty),\nu(\d y)).
\label{mu}
\end{equation}
The inverse operator is the extension to 
$L^2((0,\infty),\nu(\d y))$ 
of the integral operator
\[
{\mathcal F}_{{\textnormal{KL}}}^{-1} g(y)= \int_0^{\infty} K_{\i y}(e^{x}) g(y) \nu(\d y).
\]

The Markov semigroup of the process $X$ is given by
\begin{equation}\label{eq:P_KL}
   {\mathcal P}_{s_1,s_2} f(x)= {\mathbb E} [ f(X_{s_2}) | X_{s_1}=x]= 
   \Big[{\mathcal F}_{{\textnormal{KL}}}^{-1} e^{(s_1-s_2) y^2}
    {\mathcal F}_{{\textnormal{KL}}} f\Big](x), \;\;\;
    0 \le s_1<s_2,
\end{equation}
for any bounded $f\in L^2(\r, \d x)$. 

Next, we fix a parameter $\mathfrak b>0$, and consider a process $Y=\{Y_t\}_{0\le t \le \mathfrak b}$ defined by transition probability density function 
\begin{align*}\nonumber
q^{(\mathfrak b)}_{t_1,t_2}(y_1,y_2)\d y&=\frac{1}{8 \Gamma(t_2-t_1)}\left| \Gamma\left(\frac{t_2-t_1+\i (y_1+y_2)}2\right) \Gamma\left(\frac{t_2-t_1+\i (y_1-y_2))}2\right) \right| ^2\\
\nonumber
&\qquad \qquad \times 
\frac{\vert \Gamma(\frac{1}{2}(\mathfrak b-t_2+\i y_2)) \vert^2}
{\vert \Gamma(\frac{1}{2}(\mathfrak b-t_1+\i y_1)) \vert^2} 
\nu(\d y_2),
\end{align*}
where $y_i>0$ and $0\le t_1<t_2\le \mathfrak  b$. The fact that $q^{(\mathfrak b)}_{s_1,s_2}(y_1,y_2)$ is a transition probability density function of a Markov process was established in \cite[Appendix B]{Bryc_et_al_2023}. 
In what follows, we will need two integral identities: 
\begin{align}
    \int_{\mathbb R}e^{tx} K_{\i y_1}(e^x) \d x & =2^{t-2} \Big \vert 
    \Gamma\Big(\frac{t+\i y_1}{2}\Big) \Big\vert^2, \label{eq:Y1}\\
    \int_{\r } e^{t x} K_{\i y_1}(e^x) K_{\i y_2} (e^x) \d x &=\frac{2^{t-3}}{\Gamma(t)}
\left| \Gamma\left(\frac{t+\i (y_1+y_2)}2\right) \Gamma\left(\frac{t+\i (y_1-y_2)}2\right)\right| ^2,\nonumber
 \end{align}
 where $t>0$, $y_1>0$ and $y_2>0$. These two identities can be found in 
   \cite[6.8 (26)]{erdelyi1954fg} and  \cite[6.8 (48)]{erdelyi1954fg}. 
With these identities, we can write {\em formally} the Markov semigroup of the process $Y$ as 
\begin{equation}\label{eq_Q_dual_Hahn}
    \calQ^{(\mathfrak b)}_{t_1,t_2} g(y)={\mathbb E} [ g(Y_{t_2}) | Y_{t_1}=y]= 
    \Big[\frac{1}{j^{(\mathfrak b)}_{t_1}} {\mathcal F}_{{\textnormal{KL}}}
e^{(t_1-t_2)\psi} {\mathcal F}_{{\textnormal{KL}}}^{-1}( j^{(\mathfrak b)}_{t_2} g )\Big](y),
\;\;\; 0\le t_1 < t_2\le \mathfrak b,
\end{equation}
where $\psi(x)=-x$ and
\[\label{eq:j}
j^{(\mathfrak b)}_t(y)=2^{\mathfrak b-t-2} \Big\vert \Gamma\Big(\frac{1}{2}(\mathfrak b-t+\i y)\Big) \Big\vert^2, \;\;\; y>0, \; 0\le t\le\mathfrak b.  
\]

The squared process $\{Y_t^2\}_{0\le t \le\mathfrak  b}$ (with appropriately specified law for $Y_0$) is called {the continuous dual Hahn process}, playing an important role in recent studies of stationary measures
for the open KPZ equation (see \cite{corwin24stationary}, also  \cite{BRYC2022185,Bryc_et_al_2023}).

We emphasize that although formally this example (in particular \eqref{eq:P_KL} and \eqref{eq_Q_dual_Hahn}) looks similar to those we have discussed in previous sections (with $j^{(\mathfrak b)}_t$ as above and $h_s = 1$), here certain integrability is lost and hence one cannot apply Theorem \ref{thm0}.  Namely, the problem with \eqref{eq_Q_dual_Hahn} is that the function $e^{(t_1-t_2)\psi(x)}$ is unbounded and  it is  unclear whether $e^{(t_1-t_2)\psi} {\mathcal F}_{{\textnormal{KL}}}^{-1} (j^{(\mathfrak b)}_{t_2} g)
\in L^2(\r, \d x)$ if $j^{(\mathfrak b)}_{t_2} g \in L^2((0,\infty), \nu(\d y))$. This problem was overcome in \cite{Bryc_et_al_2023} by working with $\calF_{\textnormal{KL}}$ and its inverse on certain $L^1$ spaces in place of those $L^2$ spaces indicated in \eqref{mu}.
More precisely, we need the following lemma to  check that the processes $X$ and $Y$ satisfy  Assumption \ref{assump:2}. 
\begin{lemma}[Lemma A.1 in \cite{Bryc_et_al_2023}]\label{L0} Let $\delta>0$.
\begin{itemize}
\item[(i)]  
If $f\in L^1(\r, K_0(e^x) \d x)$   then  $e^{-\delta y^2} {\mathcal F}_{{\textnormal{KL}}} f\in L^1((0,\infty),\nu(\d y))$.
\item[(ii)] If $g\in L^1((0,\infty),\nu(\d y))$  then 
$
e^{\delta x} {\mathcal F}_{{\textnormal{KL}}}^{-1} g
\in L^1(\r, K_0(e^x) \d x)$.
\end{itemize}
\end{lemma}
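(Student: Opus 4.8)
The plan is to reduce both statements to a single pointwise majorization of the Bessel kernel. The key observation is that for real $y>0$ and $z>0$ the integral representation $K_{\i y}(z)=\int_0^\infty e^{-z\cosh u}\cos(yu)\,\d u$ yields at once
\[
|K_{\i y}(z)|\le \int_0^\infty e^{-z\cosh u}\,\d u=K_0(z),
\]
since $|\cos(yu)|\le 1$. This bound is uniform in $y$, and it is the linchpin of the whole argument: it replaces the oscillatory kernel appearing in ${\mathcal F}_{{\textnormal{KL}}}$ and its inverse by the positive, integrable kernel $K_0(e^x)$ that defines the two $L^1$ spaces in the statement.

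For part (i), I would first control the transform in sup-norm. For $f\in L^1(\r,K_0(e^x)\,\d x)$ and any $y>0$ the majorization gives
\[
|{\mathcal F}_{{\textnormal{KL}}}f(y)|\le \int_\r |f(x)|\,K_0(e^x)\,\d x=:C<\infty,
\]
so the defining integral converges absolutely and ${\mathcal F}_{{\textnormal{KL}}}f$ is bounded by $C$. It then remains to integrate the Gaussian weight against $\nu$, and the claim reduces to the elementary estimate
\[
\int_0^\infty e^{-\delta y^2}\,\nu(\d y)=\frac{2}{\pi^2}\int_0^\infty e^{-\delta y^2}\,y\sinh(\pi y)\,\d y<\infty.
\]
The only point to check here is the behaviour as $y\to+\infty$, where $\sinh(\pi y)$ grows like $e^{\pi y}/2$; since $\delta>0$, the factor $e^{-\delta y^2}$ dominates this exponential growth and the integral converges.

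For part (ii), the same majorization applied to the inversion formula ${\mathcal F}_{{\textnormal{KL}}}^{-1}g(x)=\int_0^\infty K_{\i y}(e^x)g(y)\,\nu(\d y)$ gives, for $g\in L^1((0,\infty),\nu)$,
\[
|{\mathcal F}_{{\textnormal{KL}}}^{-1}g(x)|\le K_0(e^x)\int_0^\infty |g(y)|\,\nu(\d y).
\]
Hence $\int_\r e^{\delta x}\,|{\mathcal F}_{{\textnormal{KL}}}^{-1}g(x)|\,K_0(e^x)\,\d x$ is bounded by a constant multiple of $\int_\r e^{\delta x}K_0(e^x)^2\,\d x$, and after the substitution $z=e^x$ the remaining task is the convergence of $\int_0^\infty z^{\delta-1}K_0(z)^2\,\d z$. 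I would settle this with the standard asymptotics of $K_0$: near $z=0$ one has $K_0(z)=O(|\log z|)$, so the integrand is $O(z^{\delta-1}(\log z)^2)$, which is integrable precisely because $\delta>0$; near $z=\infty$, $K_0(z)^2$ decays like $e^{-2z}$, which overwhelms the polynomial factor $z^{\delta-1}$.

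The main obstacle is bookkeeping rather than conceptual: once the uniform bound $|K_{\i y}(z)|\le K_0(z)$ is secured, both claims collapse to one-dimensional integrability checks. The most delicate of these is the logarithmic singularity of $K_0$ at the origin in part (ii), where the hypothesis $\delta>0$ is used essentially---with $\delta=0$ the integral $\int_0^\infty z^{-1}K_0(z)^2\,\d z$ would diverge at $z=0$. I would also remark that the same estimates in fact show ${\mathcal F}_{{\textnormal{KL}}}$ and ${\mathcal F}_{{\textnormal{KL}}}^{-1}$ extend to bounded operators between the indicated $L^1$ spaces, not merely that they preserve membership.
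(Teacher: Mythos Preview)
The paper under review does not supply a proof of this lemma; it merely quotes the result from \cite[Lemma~A.1]{Bryc_et_al_2023}. Your argument is correct and is, in essence, the standard route: the uniform majorization $|K_{\i y}(z)|\le K_0(z)$ (immediate from the integral representation) reduces both parts to one-variable integrability checks, and your treatment of those checks---the Gaussian weight beating the exponential growth of $y\sinh(\pi y)$ in~(i), and the $K_0(z)=O(|\log z|)$ singularity at the origin in~(ii)---is accurate. This is also the approach taken in the cited reference, so there is no substantive difference to report.
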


Now we can show that processes $X$ and $Y$ satisfy Assumption \ref{assump:2}. We take 
 
 \[
 {\mathfrak A}=L^1(\r, K_0(e^x) \d x) \quad \mbox{ and } \quad{\mathfrak B}=L^1((0,\infty),\nu(\d y)),
 \]   
 define $\phi(y)=y^2$ and
$\psi(x)=-x$. It is known that the operator $\calF_{\textnormal{KL}}$ (resp.~$\calF_{\textnormal{KL}}^{-1}$) can be extended to $\mathfrak A$ (resp.~$\mathfrak B$), and we let $\calF$ (resp.~$\calG$) denote this extension.  (It is not clear if the invertibility relation still holds, but this is irrelevant to the discussions here.) 
All the conditions of Assumption \ref{assump:2} are satisfied due to Lemma \ref{L0} and identities
\eqref{eq:P_KL} and  \eqref{eq_Q_dual_Hahn}. 
We have the following.
\begin{proposition}
Assume that 
\[
 0=s_0<s_1< \dots < s_{n}, \;\;\; 0=t_0<t_1<\dots<t_{n}< \mathfrak b.
\]
Denote 
\[
{\mathsf F}(x):=\e\Big[ e^{\sum\limits_{k=0}^{n-1}  \Delta t_kX_{s_k}
+ (\mathfrak b-t_n)X_{s_n}} | X_{0}=x \Big], \;\;\; x \in {\mathbb R},
\]
and 
\[
{\mathsf G}(y):=\e\Big[ e^{-\sum\limits_{k=0}^{n-1}   \Delta s_kY_{t_{k+1}}^2} | Y_{0}=y \Big], 
\;\;\; y>0. 
\]
Then ${\mathsf F} \in L^1(\r, K_0(e^x) \d x)$ and
$j_{0} {\mathsf G} =  {\mathcal F}_{{\textnormal{KL}}}{\mathsf F} $.
\end{proposition}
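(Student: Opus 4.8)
The plan is to deduce the Proposition as a direct application of Theorem \ref{thm3}, using the data already assembled above: the Banach spaces $\mfA = L^1(\r, K_0(e^x)\,\d x)$ and $\mfB = L^1((0,\infty),\nu(\d y))$, the operators $\calF$ and $\calG$ extending $\calF_{\textnormal{KL}}$ and $\calF_{\textnormal{KL}}^{-1}$, the functions $\phi(y) = y^2$ and $\psi(x) = -x$, and the weights $h_s \equiv 1$, $j_t = j^{(\mathfrak b)}_t$. That these satisfy Assumption \ref{assump:2} has been checked via Lemma \ref{L0} together with the semigroup representations \eqref{eq:P_KL} and \eqref{eq_Q_dual_Hahn}. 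What remains is to pick the boundary functions $f$ and $g$ so that the two expectations in the statement are exactly the $\mathsf F$ and $\mathsf G$ of Theorem \ref{thm3}, and then to verify the two hypotheses of that theorem: $h_{s_n}f \in \mfA$ and the compatibility condition $j_{t_n}g = \calF(h_{s_n}f)$.

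First I would read off $f$ and $g$ by matching exponents. Since $\psi(x) = -x$, we have $-\sum_{k=0}^{n-1}\Delta t_k\psi(X_{s_k}) = \sum_{k=0}^{n-1}\Delta t_k X_{s_k}$, so the leftover factor $e^{(\mathfrak b - t_n)X_{s_n}}$ in $\mathsf F$ dictates the choice $f(x) = e^{(\mathfrak b - t_n)x}$. Since $\phi(y) = y^2$, the exponent of $\mathsf G$ is already $-\sum_{k=0}^{n-1}\Delta s_k\phi(Y_{t_{k+1}})$ with no trailing factor, so I take $g \equiv 1$. With $s_0 = t_0 = 0$ and $h_{s_0} = 1$, the functions $\mathsf F$ and $\mathsf G$ produced by Theorem \ref{thm3} then coincide with those in the statement.

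Next I would verify the two hypotheses. For integrability, since $\mathfrak b - t_n > 0$ the factor $e^{(\mathfrak b - t_n)x}$ is integrable against $K_0(e^x)\,\d x$: it decays as $x\to-\infty$ (where $K_0(e^x)$ has only a logarithmic singularity) and is dominated by the super-exponential decay of $K_0(e^x)$ as $x\to+\infty$, so $h_{s_n}f = f \in \mfA$. The step I expect to carry the real content is the compatibility condition, where identity \eqref{eq:Y1} enters: computing $\calF_{\textnormal{KL}}f(y) = \int_\r e^{(\mathfrak b - t_n)x}K_{\i y}(e^x)\,\d x$ and applying \eqref{eq:Y1} with $t = \mathfrak b - t_n$ yields $2^{\mathfrak b - t_n - 2}\bigl|\Gamma\bigl(\tfrac12(\mathfrak b - t_n + \i y)\bigr)\bigr|^2$, which is exactly $j^{(\mathfrak b)}_{t_n}(y) = j_{t_n}g(y)$. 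The key observation is thus that the boundary weight $j^{(\mathfrak b)}_{t_n}$ of the continuous dual Hahn process is precisely the Kontorovich--Lebedev transform of an exponential, and this is what makes $g \equiv 1$ the correct partner for $f$. I would also flag that $\calF_{\textnormal{KL}}f = j^{(\mathfrak b)}_{t_n}$ is \emph{not} itself in $\mfB$ (its product with $\nu$ grows polynomially by Stirling's formula), which is exactly why one must work in the relaxed framework of Assumption \ref{assump:2} rather than Assumption \ref{assump:1}; the damping factors $e^{(s_k - s_{k+1})\phi}$ and $e^{(t_k - t_{k+1})\psi}$ arising from the positive increments supply precisely the $e^{-\delta\phi}$ and $e^{-\delta\psi}$ needed to invoke \eqref{condition_calF} and \eqref{condition_calG} at each step of the operator chain.

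With both hypotheses confirmed, Theorem \ref{thm3} gives $h_{s_0}\mathsf F = \mathsf F \in \mfA = L^1(\r, K_0(e^x)\,\d x)$ together with $j_{t_0}\mathsf G = \calF(h_{s_0}\mathsf F)$; specializing $s_0 = t_0 = 0$ and $h_{s_0} = 1$, this last identity reads $j_0\mathsf G = \calF_{\textnormal{KL}}\mathsf F$, which is the assertion of the Proposition.
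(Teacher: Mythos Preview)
Your proposal is correct and follows essentially the same approach as the paper's proof: apply Theorem \ref{thm3} with $h_s\equiv 1$, $j_t=j^{(\mathfrak b)}_t$, $f(x)=e^{(\mathfrak b-t_n)x}$ and $g\equiv 1$, verify $f\in\mfA$ via the asymptotics of $K_0(e^x)$, and use identity \eqref{eq:Y1} to check the compatibility condition $j_{t_n}g=\calF(h_{s_n}f)$. Your write-up is more detailed than the paper's (which leaves the compatibility check implicit in the parenthetical ``recall \eqref{eq:Y1}''), and your side remark about why $\calF_{\textnormal{KL}}f\notin\mfB$ forces the Banach-space framework is a nice piece of added context.
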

\begin{proof}
We apply Theorem 
\ref{thm3} with $h_s = 1$, $j_t$ as in \eqref{eq:j}, 
\[
f(x)=e^{(\mathfrak b-t_n)x},
\] and $g(y)=1$ (recall \eqref{eq:Y1}). It suffices to check $f\in\mathfrak A$.  It is known (e.g.~\cite[Eq.(A4) and (A5)]{Bryc_et_al_2023}) that $K_0(e^x)=O(\exp(-e^x))$ 
as $x \to +\infty$ and $K_0(e^x)=O( e^{-\epsilon x})$ as $x\to -\infty$ (for any $\epsilon>0$).  These bounds guarantee that $f \in {\mathfrak A}$. 
\end{proof}

\begin{remark}\label{rem:KPZ}
There are quite a few places where this example looks quite different from earlier ones. First, the most remarkable one is probably the fact that the function $e^{(t_1-t_2)\psi}$ in \eqref{eq_Q_dual_Hahn} is unbounded, as discussed earlier. Second,  the process $Y$ is not time-homogeneous, and (its law and the range of its time index) depending the parameter $\mathfrak b$. Next, the choice of function $f$ depends also on $t_n$ and $\mathfrak b$. 

As mentioned already, the duality formula plays a crucial role in understanding the joint Laplace transform of the so-called stationary measure of open KPZ equation in \cite{Bryc_et_al_2023}. Therein, a corresponding development of Corollary \ref{corollary1} here, and some delicate choices of $\ell_{\rm init}^X$ and $\ell_{\rm end}^X$, are needed. Also needed is a general version of Parseval's identity in the form of
\[
\int_\r f(x) \calG g(x)\d x = \int_0^\infty g(y)\calF f(y)\mu(\d y), f\in \mathfrak A, g\in\mathfrak B.
\]
See \cite[Lemma A.1]{Bryc_et_al_2023}. We omit the details.
\end{remark}
\subsection*{Acknowledgement}
AK's research was partially supported by the Natural Sciences and Engineering Research Council of Canada.
YW's research was partially supported by Army Research Office, US (W911NF-20-1-0139). The authors would like to thank W\l odek Bryc,  Pierre Patie and Mladen Savov for very helpful discussions. The authors are also grateful to an anonymous referee for carefully reading the paper and for providing helpful comments and suggestions.



\def\cprime{$'$} \def\polhk#1{\setbox0=\hbox{#1}{\ooalign{\hidewidth
  \lower1.5ex\hbox{`}\hidewidth\crcr\unhbox0}}}
  \def\polhk#1{\setbox0=\hbox{#1}{\ooalign{\hidewidth
  \lower1.5ex\hbox{`}\hidewidth\crcr\unhbox0}}}


\end{document}